\DeclareMathOperator{\Var}{Var}
\newcommand{\bE}{\ensuremath{\mathbb{E}}}
\newcommand{\bN}{\ensuremath{\mathbb{N}}}
\newcommand{\bP}{\ensuremath{\mathbb{P}}}
\newcommand{\bR}{\ensuremath{\mathbb{R}}}
\newcommand{\bZ}{\ensuremath{\mathbb{Z}}}
\newcommand{\ind}{\ensuremath{\mathbbm{1}}}
\newcommand{\cE}{\ensuremath{\mathcal{E}}}
\newcommand{\cP}{\ensuremath{\mathcal{P}}}
\newcommand{\cR}{\ensuremath{\mathcal{R}}}
\newcommand{\norm}[1]{\left\Vert \, #1 \, \right\Vert}
\theoremstyle{plain}
\newtheorem{theorem}{Theorem}[section]  
\newtheorem{proposition}[theorem]{Proposition}
\newtheorem{lemma}[theorem]{Lemma}
\theoremstyle{definition}
\theoremstyle{remark}
\newtheorem{remark}{Remark}[section]
\numberwithin{equation}{section}
\title{Mixing for Poisson representable processes and consequences for the Ising model and the contact process}
\author{Stein Andreas Bethuelsen\footnote{Department of Mathematics, University of Bergen, Norway.}\quad Malin Palö Forsstr\"om\footnote{Chalmers University of Technology and University of Gothenburg, Sweden}}
\newcommand\pig[1]{\scalerel*[5.5pt]{\Big#1}{%
 \ensurestackMath{\addstackgap[1.5pt]{\big#1}}}}
\newcommand\pigl[1]{\mathopen{\pig{#1}}}
\newcommand\pigr[1]{\mathclose{\pig{#1}}}
\begin{document}
\maketitle

\begin{abstract}
  \citet{JeffNinaMalin2024} recently introduced a large class of $\{0,1\}$-valued processes that they named Poisson representable. In addition to deriving several interesting properties for these processes, their main focus was determining which processes are contained in this class. 

  In this paper, we derive new characteristics for Poisson representable processes in terms of certain mixing properties. Using these, we argue that neither the upper invariant measure of the supercritical contact process on $\bZ^d$ nor the plus state of the Ising model on $\bZ^2$ within the phase transition regime is Poisson representable. Moreover, we show that on \( \mathbb{Z}^d,\) \( d \geq 2,\) any non-extremal translation invariant state of the Ising model cannot be Poisson representable. 
  Together, these results provide answers to questions raised in~\cite{JeffNinaMalin2024}.
\end{abstract}

\section{Introduction, main results and outline of the paper}\label{sec:main}

We first recall the definition of Poisson representable processes from~\cite{JeffNinaMalin2024}. Let $S$ be a finite or countably infinite set, and let $\nu$ be a $\sigma$-finite measure on $\cP(S) \setminus\{\emptyset \}$, where $\cP(S)$ is the power set of $S$. Consider the corresponding Poisson process with intensity measure $\nu$, denoted by $Y^{\nu}$. 
Thus, $Y^{\nu}$ is a random (possibly empty) collection $(B_j)_{j \in I}$ of non-empty subsets (perhaps with repetitions) of $S$. This generates a $\{0,1\}$-valued process $X^{\nu} =(X^{\nu}_i)_{i \in S}$ defined by letting 
\begin{equation}\label{eq:def} 
  X^{\nu}(i) \coloneqq 
  \begin{cases}
    1 & \text{ if } i \in \cup_{j\in I} B_j, \cr 
    0 & \text{ otherwise.}
  \end{cases} 
\end{equation}
Similarly to~\cite[Definition 1]{JeffNinaMalin2024}, we denote by $\cR(S)$ the collection of all processes $(X(i))_{i \in S}$ that are equal (in distribution) to $X^{\nu}$ for some intensity measure $\nu$. A process $X\in \cR(S)$ is said to be \emph{Poisson representable}.

As discussed thoroughly in~\cite[Section 1]{JeffNinaMalin2024}, many well-studied stochastic processes are Poisson representable, with the random interlacement being one notable example. Moreover, as concluded in~\cite[Theorem 3.1]{JeffNinaMalin2024}, all non-trivial stationary positively associated Markov chains on $\{0,1\}^{\bZ}$ are in $\cR(\bZ)$. (In fact, it contains a larger class of certain renewal processes, see~\cite[Theorem 3.5]{JeffNinaMalin2024}). Identifying $-1$ with $0$, it thus follows that the Ising model on $\bZ$ is Poisson representable for all parameter values. 
On the contrary, by~\cite[Theorem 6.1]{JeffNinaMalin2024}, tree-indexed Markov chains are not always in $\cR$, nor is the Ising model on $\bZ^d$, $d\geq 2$, see~\cite[Theorem 6.3]{JeffNinaMalin2024}. This latter result, however, was only proven to hold when the parameter $\beta$ is sufficiently small. Based on this, a natural open question, raised in~\cite[Question 2, Section 8]{JeffNinaMalin2024}, is if, for any $d\geq 2$ and $\beta>0$, the Ising model on $\bZ^d$ is in $\cR(\bZ^d)$? 

We write $\mu_{\beta}^{\pm}$ to denote the plus phase and minus phase of the Ising model with inverse temperature $\beta$, respectively. Further, we denote its critical parameter value by
\begin{equation}
  \beta_c \coloneqq \beta_c(S) \coloneqq \inf 
  \{ \beta>0 \colon \mu_{\beta}^{+} \neq \mu_{\beta}^{-} \}.
\end{equation}
It is well known that $\beta_c(\bZ^d) \in (0,\infty)$ whenever $d\geq 2$ and, in fact, that it equals $1/2 \log(1 + \sqrt{2})$ when $d=2$. As a reference to the precise definition and for the basic properties of the Ising model, see, e.g.,~\citet{friedli_velenik_2017}.

Following the convention that $-1$ is identified with $0$, our first main results partially answer whether the Ising model on $\bZ^d$ is Poisson representable.

\begin{theorem}\label{theorem:Ising2}
    Let $X \sim \mu_{\beta}^+$ on $\{0,1\}^{\bZ^2}$. For all \( \beta > \beta_c\) it holds that $X \notin \cR(\bZ^2)$.
\end{theorem}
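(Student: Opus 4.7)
The plan is to argue by contradiction. Suppose $X \sim \mu_\beta^+$ belongs to $\cR(\bZ^2)$ with intensity measure $\nu$, so that for every finite $A \subset \bZ^2$,
\[
\mu_\beta^+(\sigma|_A = -1) = \exp\bigl(-\nu(\{B : B \cap A \neq \emptyset\})\bigr).
\]
Writing $h(A)$ for the exponent, the elementary M\"obius identity
\(
\nu(\{B : B \cap A_i \neq \emptyset \text{ for all } i\}) = \sum_{T \subseteq [k],\, T \neq \emptyset} (-1)^{|T|+1} h\bigl(\bigcup_{i \in T} A_i\bigr) \geq 0
\)
already constrains the joint all-minus probabilities of $\mu_\beta^+$. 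I would then invoke the sharper mixing characterisation of Poisson representable processes proved earlier in the paper to turn this combinatorial positivity requirement into a quantitative necessary condition linking the $h(\bigcup_{i \in T} A_i)$ across subsets $T$ of a chosen finite family $\{A_1,\dots,A_k\}$.

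The contradiction would come from combining two classical structural inputs for the supercritical $2$D Ising model. First, Peierls contour estimates pin down $h(A)$ for geometric sets $A$ (disks, annuli, rectangles) to leading order in terms of the surface tension along an optimal outer contour enclosing $A$, with a sub-leading volume contribution reflecting rare plus-excitations trapped inside an otherwise-minus droplet. Second, the DLR/Markov property says that, conditional on $\sigma|_A = -1$ for a thick annulus $A$ surrounding a large disk $D$, the interior is distributed as Ising on $D$ with minus boundary conditions; for $\beta > \beta_c$ and $D$ large, this approximates the minus phase $\mu_\beta^-|_D$ and carries local magnetisation $-m^*(\beta) < 0$, opposite to that of the plus phase. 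Evaluating the mixing inequality on a carefully chosen nested family of disks and annuli then produces, after the leading perimeter-driven terms telescope in the M\"obius alternating sum, a residual contribution whose sign is governed by the phase-transition gap $m^*(\beta) > 0$.

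The main obstacle will be selecting this nested family so that the surviving sub-leading residue is strictly incompatible with the Poisson mixing bound, and then controlling the error terms in the Peierls asymptotics down to that order. Identifying the right sub-leading signal is the delicate point: perimeter-driven leading terms cancel automatically in the alternating sums of $h$, while the volume- and magnetisation-driven corrections must be shown to survive with the wrong sign for Poisson representability. I expect the mixing characterisation from the earlier sections of the paper to be calibrated exactly so as to convert this qualitative asymmetry between the $+$ and $-$ phases, which is only visible at supercritical $\beta$, into a clean quantitative contradiction.
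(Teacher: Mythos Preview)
Your proposal is not a proof but a speculative outline, and you yourself flag the decisive step as unresolved: you would need to exhibit a concrete family of nested sets and control the Peierls asymptotics of $h(A)$ to sub-leading order so that, after the perimeter terms telescope in the alternating M\"obius sum, a residual magnetisation-driven term survives with a sign forbidden by Poisson representability. There is no indication that this works. Large-deviation expansions for $h(A) = -\log \mu_\beta^+(\sigma|_A \equiv -1)$ typically come only with multiplicative $(1+o(1))$ control on the leading surface-tension term, and the $o(1)$ error would swamp any volume or magnetisation correction in an alternating sum of $h$-values over subsets. Turning the qualitative gap $m^*(\beta)>0$ into a quantitative inequality among finitely many $h(\bigcup_{i\in T} A_i)$ is exactly the hard part, and nothing in your plan addresses it.

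The paper's argument is structurally different and avoids all asymptotics. It passes to the Schonmann projection $Z = X|_{\bZ \times \{0\}}$, using that if $X \in \cR(\bZ^2)$ then $Z \in \cR(\bZ)$ by restriction. The mixing characterisation (Theorem~\ref{thm:PRmain}) is then applied to $Z$ with \emph{two different partitions} of $\bZ$. With the trivial partition $Q_1=\bZ$, two-sided conditioning $Z([-m,m]\setminus[-n,n])\equiv -1$ drives the law to $\mu_\beta^-$ (Schonmann's classical non-Gibbsianness lemma), so $Z \neq Z^{(<\infty)}$. With the partition $(\bN,\, \bZ\setminus\bN)$, one-sided conditioning $Z((-m,-n))\equiv -1$ leaves the law at $\mu_\beta^+$ (a wetting-type result of Bethuelsen--Conache), so $Z = Z^{(<\infty)}$. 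These two conclusions are mutually exclusive, and the contradiction is immediate. Your DLR/annulus observation is essentially the two-sided input; what your plan lacks entirely is the one-sided mixing statement, and it is precisely the \emph{contrast} between one-sided and two-sided behaviour, fed through the partition mechanism of Theorem~\ref{thm:PRmain}, that produces the contradiction.
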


\begin{theorem}\label{theorem: all middle Ising} 
  Consider $X \sim \alpha \mu_{\beta}^+ + (1-\alpha)\mu_{\beta}^-$ on $\{0,1\}^{\bZ^d}$ with $\alpha \in (0,1),$ $d \geq 2$. 
  Then, for any \( \beta > \beta_c,\) it holds that $X \notin \cR(\bZ^d)$.
\end{theorem}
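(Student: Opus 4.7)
\emph{Proof proposal.}
The plan is to apply the intrinsic mixing characterization of Poisson representable processes (established earlier in the paper, in the same spirit as the argument for Theorem~\ref{theorem:Ising2}) to the mixture and derive a contradiction with its manifest non-ergodicity.

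First I would establish non-ergodicity of the mixture. For $\beta>\beta_c$ on $\bZ^d$ with $d\geq 2$, the phases $\mu_\beta^\pm$ are distinct translation invariant Gibbs measures, each extremal and hence translation ergodic, with densities $m^\pm \coloneqq \mu_\beta^\pm[X(0)=1]$ satisfying $m^+>1/2>m^-$. A direct computation then yields the persistent long-range covariance
\[
  \lim_{|x|\to\infty} \mathrm{Cov}_{\mathrm{mix}}\bigl(X(0),X(x)\bigr) \;=\; \alpha(1-\alpha)(m^+-m^-)^2 \;>\; 0,
\]
and equivalently the translation invariant event ``the empirical density of ones equals $m^+$'' has probability $\alpha\in(0,1)$ under the mixture. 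In particular the mixture is not translation ergodic.

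Next I would assume for contradiction that $X\stackrel{d}{=}X^\nu$ for some $\sigma$-finite $\nu$ on $\cP(\bZ^d)\setminus\{\emptyset\}$ and invoke the process-level mixing characterization for $\cR(\bZ^d)$ proved in the preceding sections, suitably specialized to our setting, to conclude that macroscopic long-range correlations of the form above are incompatible with the Poisson structure. Formulating the argument at the level of $X$ rather than $\nu$ has the advantage of sidestepping the non-uniqueness of the intensity (different $\nu$ can yield the same $X^\nu$), and once the mixing theorem is in hand the deduction is soft.

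The main obstacle is that $\cR(\bZ^d)$ does contain degenerate non-ergodic examples in principle: for instance, taking $\nu = c\,\delta_{\bZ^d}$ produces a process concentrated on the two constant configurations $\equiv 0$ and $\equiv 1$. Thus one cannot hope to deduce translation ergodicity of an arbitrary process in $\cR(\bZ^d)$ from the Poisson representation alone, and the mixing characterization must be applied in a way that exploits the Ising-specific input that $\mu_\beta^\pm$ are non-degenerate, translation invariant, ergodic, and distinct, so that the long-range covariance above is genuinely carried by distinct ``thermodynamic phases'' rather than by a trivial coin flip. A secondary technical point is the already-mentioned non-uniqueness of $\nu$, which rules out naive averaging-over-translates arguments and again reinforces why the argument should be framed at the level of $X$.
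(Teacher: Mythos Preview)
Your proposal correctly isolates the relevant obstruction: the mixture is translation invariant but not ergodic, and in particular the empirical average \( \bar X_n \) has a non-vanishing limiting variance \( \alpha(1-\alpha)(m^+-m^-)^2>0.\) You also correctly flag the obstacle that non-ergodicity alone cannot yield a contradiction, since \( \cR(\bZ^d)\) contains the bimodal example \( \nu=c\,\delta_{\bZ^d}.\) Where the proposal falls short is in the step where you ``invoke the process-level mixing characterization for \( \cR(\bZ^d)\) proved in the preceding sections.'' The only such characterization available at that point is Theorem~\ref{thm:PRmain}, and it does not say what you need: it concerns whether \( X=X^{(<\infty)}\) via conditioning on zeros in half-spaces, not whether \( \Var(\bar X_n)\to 0.\) You never specify which statement you are invoking or how it rules out the Ising mixture while admitting \( \nu=c\,\delta_{\bZ^d},\) so the crucial bridge is missing.

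The paper fills exactly this gap with a new lemma (Theorem~\ref{theorem: variance and limit}): for any translation invariant \( X\in\cR(\bZ^d),\) if \( \bP(\bar X_n\geq c)\to 0\) for some \( c\in(0,1)\) then \( \Var(\bar X_n)\to 0.\) The hypothesis is precisely what separates the Ising mixture from the degenerate example you worried about: for the mixture one can pick \( c\in(m^+,1)\) and get \( \bP(\bar X_n\geq c)\to 0\) by the ergodic theorem applied to each phase, whereas for \( \nu=c\,\delta_{\bZ^d}\) the probability stays bounded away from zero. Your covariance computation then delivers the contradiction. In short, your diagnosis of the difficulty is accurate and your target quantity is the right one, but the proof requires an additional non-trivial input about \( \cR(\bZ^d)\) that is not contained in the earlier mixing characterization and that you have not supplied.
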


The proof of Theorem~\ref{theorem:Ising2} is presented in Section~\ref{ssection:cmp}. It is based on a general characterization of Poisson representable processes concentrating on finite sets in terms of certain mixing properties; see Theorem~\ref{thm:PRmain}. 

As we discuss in Remark~\ref{rem:Ising2}, for $d=2$, Theorem~\ref{theorem: all middle Ising} follows by the same proof as that for Theorem~\ref{theorem:Ising2}. The extension to cover Theorem~\ref{theorem: all middle Ising} and general dimensions are treated in Section~\ref{section: middle ising}, where we present its proof. 
This is based on another general result, Theorem~\ref{theorem: variance and limit}, which states that the ergodic averages convergence in $L^2$ for any non-trivial translation invariant Poisson representable process.

Our last main result concerns the contact process and its so-called upper-invariant measure, denoted here by $\mu_{\lambda}$, which is a probability distribution on $\{0,1\}^S$. 
 See Section~\ref{sec:CP} for a precise definition of $\mu_{\lambda}$ and \cite{LiggettSIS1999} for a general reference to the contact process. Particularly, recall that the corresponding critical value is given by 
\begin{equation}
\lambda_c\coloneqq \lambda_c(S) \coloneqq \inf \{ \lambda >0 \colon \mu_{\lambda} \neq \delta_{\bar{0}} \},  
\end{equation} where $\delta_{\bar{0}}$ denotes the distribution concentrating on the "all zeros" configuration, $\bar{0} \in \{0,1\}^{S}$. It is well known that $\lambda_c(S) \in (0,\infty)$ whenever $S$ is countable infinite.

For any \( \lambda > \lambda_c,\) the measure $\mu_{\lambda}$ possesses the downward-FKG (or d-FKG) property, as concluded in~\citet{BergHaggstromKahn2006}. That is, writing $\bP$ for the distribution of $X$, for all $I \subset S$, the conditional distribution $\bP \bigl(\cdot \mid X(I) \equiv 0 \bigr)$ is positively associated with respect to events on $\{0,1\}^{S\setminus I}.$ In other words, for all increasing events $A$ and \( B, \) on $S\setminus I$, it holds that
\begin{equation}\label{eq:dfkg}
  \bP \bigl(A \cap B \mid X(I) \equiv 0  \bigr) \geq \bP \bigl( B \mid X(I) \equiv 0\bigr) \cdot \bP \bigl( A \mid X (I)\equiv 0 \bigr).
\end{equation}
As concluded in~\cite[Theorem 2.4]{JeffNinaMalin2024},  all Poisson representable processes have the d-FKG property. 
As in~\cite[Question 5, Section 8]{JeffNinaMalin2024} it is therefore natural to ask whether $X\sim \mu_{\lambda}$ is Poisson representable.
Again, we conclude that this is generally not the case for the contact process on $\bZ^d$, $d\geq1$. 
\begin{theorem}\label{theorem:cp1}
  Let $X\sim \mu_{\lambda}$ on $\{0,1\}^{\bZ^d}$ with \( d \geq 1\). 
  Then $X\in \cR(\bZ^d)$ if and only if $\lambda \leq \lambda_c$. 
\end{theorem}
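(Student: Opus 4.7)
The statement splits into a trivial and a substantive direction. When $\lambda \leq \lambda_c$, by definition $\mu_\lambda = \delta_{\bar 0}$, which is Poisson representable via the zero intensity measure: setting $\nu \equiv 0$ gives an almost surely empty Poisson process $Y^\nu$, hence $X^\nu \equiv \bar 0$, so $\mu_\lambda \in \cR(\bZ^d)$.

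For the non-trivial direction, assume $\lambda > \lambda_c$ and suppose for contradiction that $\mu_\lambda \in \cR(\bZ^d)$ with representing intensity measure $\nu$. Since $\mu_\lambda$ is translation invariant and extremal, one may take $\nu$ translation invariant by averaging over translates. The plan is to combine the mixing characterization of Theorem~\ref{thm:PRmain} with a correlation estimate for the supercritical contact process coming from its graphical construction. A first technical step is to reduce to $\nu$ supported on finite subsets of $\bZ^d$; this can be extracted from finite-volume occupation estimates for $\mu_\lambda$ (in particular, $\bP(X(I) \equiv 0) > 0$ for every finite $I$, which rules out too much mass of $\nu$ on infinite sets once translation invariance is imposed).

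With $\nu$ concentrated on finite sets, Theorem~\ref{thm:PRmain} furnishes a quantitative \emph{mixing through zeros} statement: conditional on $X \equiv 0$ on a large intermediate finite set $I$, the configurations on the two sides of $I$ decouple as $I$ grows. The contradiction will come from exhibiting a two-point correlation that survives such conditioning, namely
\begin{equation}
\bP\bigl(X(0) = X(x) = 1 \mid X(I) \equiv 0\bigr) - \bP\bigl(X(0) = 1 \mid X(I) \equiv 0\bigr)\,\bP\bigl(X(x) = 1 \mid X(I) \equiv 0\bigr),
\end{equation}
with $I$ a thick annulus separating $0$ from $x$. From the graphical construction one expects a definite positive residual here, since the backward contact processes from $0$ and from $x$ whose survival defines the events $\{X(0) = 1\}$ and $\{X(x) = 1\}$ can avoid $I$ by routing through the time direction, producing a joint dependence that the Poisson decoupling forbids.

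The principal obstacle is making this residual correlation quantitative and uniform in $\lambda > \lambda_c$. Close to criticality both $\rho(\lambda)$ and the unconditional two-point correlations of $\mu_\lambda$ are small, so one must carefully compare the decoupling rate from Theorem~\ref{thm:PRmain} against a matching lower bound, most naturally obtained by coupling $\mu_\lambda$ to its version conditioned on $X(I) \equiv 0$ inside the graphical representation and tracking the few surviving space-time paths that account for the discrepancy.
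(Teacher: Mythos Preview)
Your trivial direction is fine and matches the paper. The substantive direction, however, has a genuine gap at its very first ``technical step'', and the remainder never recovers.

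You assert that from $\bP(X(I)\equiv 0)>0$ for every finite $I$ (plus translation invariance) one can reduce to $\nu$ supported on finite sets. This is false: $\bP(X(I)\equiv 0)=e^{-\nu(\cS_I^\cup)}$, so positivity only gives $\nu(\cS_I^\cup)<\infty$ for each finite $I$, which is perfectly compatible with $\nu$ putting all its mass on infinite sets (take $\nu$ a unit point mass on any single infinite $\Delta\subset\bZ^d$). In the paper the reduction $X=X^{(<\infty)}$ is not an a priori observation at all; it is one half of a contradiction, obtained by proving a nontrivial \emph{directional} mixing estimate for the supercritical contact process (Theorem~\ref{theorem:cp2}\ref{item: cp2a}) and feeding it into Theorem~\ref{thm:PRmain} with the partition of $\bZ^d$ into its $2d$ quadrants. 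The other half of the contradiction comes from the opposite mixing behaviour: conditioning on zeros in a full annulus forces the inside to be zero (Theorem~\ref{theorem:cp2}\ref{item: cp2b}), which via Theorem~\ref{thm:PRmain} with the trivial partition yields $X\neq X^{(<\infty)}$. The whole point is that these two conclusions collide.

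Once the finite-support reduction is removed, the rest of your outline is left without a foundation. You misread Theorem~\ref{thm:PRmain} as a quantitative decoupling statement; it is only a qualitative equivalence, and it says nothing about conditional two-point correlations across an annulus. Your proposed contradiction via a residual conditional covariance is an interesting intuition, but you yourself flag the ``principal obstacle'' and do not resolve it: there is no argument offered that the graphical-representation heuristic survives the conditioning, let alone one that is uniform near $\lambda_c$. The paper sidesteps this entirely by working with single-site marginals under two incompatible conditioning geometries (half-space versus annulus), which turns the problem into two concrete limit statements about $\mu_\lambda$ that can actually be proved from the graphical construction.
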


Thus, the upper invariant measure of the contact process is Poisson representable only in the trivial case when it equals the distribution concentrating on $\bar{0}$.

\subsection*{Outline of the paper} 

In the next section, we first state and prove Theorem~\ref{thm:PRmain}. In the following subsection, we show how to apply this to prove Theorem~\ref{theorem:Ising2} and Theorem~\ref{theorem:cp1}. Moreover, in several remarks, we discuss possible extensions of this approach that, among others, shed light on additional questions raised in~\cite{JeffNinaMalin2024}. 
Section~\ref{section: middle ising} is devoted to the proofs of~Theorem~\ref{theorem: all middle Ising} and~Theorem~\ref{theorem: variance and limit}. 
In the last section, Section~\ref{sec:CP}, we present the proof of Theorem~\ref{theorem:cp2}, which provides a mixing result of independent interest for the supercritical contact process that we apply in the proof of Theorem~\ref{theorem:cp1}.

\section{Proofs of Theorem~\ref{theorem:Ising2} and Theorem~\ref{theorem:cp1} }\label{section: new versions}

This section gives the detailed proofs of Theorem~\ref{theorem:Ising2} and Theorem~\ref{theorem:cp1}. We first present a characterization of Poisson representable processes with an intensity measure that concentrates on finite sets, see Theorem~\ref{thm:PRmain} below, on which these proofs hinge.

\subsection{Mixing for Poisson representable processes}\label{section: properties}

Recall that if $X \in \cR(S)$, then it can be constructed as detailed in~\eqref{eq:def}. 
On the same probability space, for any $\Gamma \subset \cP(S)$, we can also construct the process $X^{(\Gamma)}$ given by
\begin{equation}\label{eq:coupling_construction}
X^{(\Gamma)}(i) = 
\begin{cases}
  1 & \text{ if } i \in \cup_{j\in I_{\Gamma}} B_j, \cr 
  0 & \text{ otherwise,}
\end{cases} 
\end{equation}
where $I_{\Gamma} = \{ j \in I \colon B_j \in \Gamma \} \subseteq I (= I_{\cP(S)})$. 

Note that, for any $\Gamma \subset \cP(S)$, we have that $X^{(\Gamma)} \in \cR(S)$ with intensity measure $\nu|_{\Gamma}$. Particularly, $X=X^{(\cP(S))}$ and $\nu=\nu |_{\cP(S)}$. Moreover, note that in principle, it may be that $\nu |_{\Gamma}$ equals the trivial measure that assigns no weight to any subset of $\cP(S)$. In this latter case, $X^{(\Gamma)} \sim \delta_{\bar{0}}$.

\begin{lemma}\label{lemma: transfer to limit new}
    Consider $X \in \cR(S)$ and let $(\Gamma_n)$ be either an increasing or decreasing sequence of subsets of $\cP(S)$, with $\Gamma= \bigcup \Gamma_n$ (if increasing) or $\Gamma=\bigcap \Gamma_n $ (if decreasing).
    Then the weak limit of $X^{(\Gamma_n)}$ as $n\rightarrow \infty$ equals $X^{( \Gamma)}$.\ 
\end{lemma}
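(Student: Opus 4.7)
The natural approach is to exploit the coupling in~\eqref{eq:coupling_construction}: both $X^{(\Gamma_n)}$ and $X^{(\Gamma)}$ are deterministic functions of the \emph{same} Poisson process $Y^\nu = (B_j)_{j\in I}$. Since weak convergence on $\{0,1\}^S$ with its product topology is equivalent to convergence of all finite-dimensional marginals, and since almost-sure coordinatewise convergence on a coupling implies joint almost-sure convergence on any finite subset of coordinates, it suffices to prove that $X^{(\Gamma_n)}(i) \to X^{(\Gamma)}(i)$ almost surely for every $i \in S$. The coupling moreover makes $n \mapsto X^{(\Gamma_n)}(i)$ monotone --- non-decreasing when $\Gamma_n \uparrow \Gamma$, non-increasing when $\Gamma_n \downarrow \Gamma$ --- so the pointwise limit $L(i) := \lim_n X^{(\Gamma_n)}(i)$ exists almost surely, and only its identification with $X^{(\Gamma)}(i)$ requires work.

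In the increasing case the identification is immediate: one has $I_{\Gamma_n} \uparrow I_\Gamma$, since $B_j \in \Gamma = \bigcup_n \Gamma_n$ iff $B_j \in \Gamma_n$ for some $n$, so $\bigcup_{j \in I_{\Gamma_n}} B_j$ is an increasing family of subsets of $S$ with union $\bigcup_{j \in I_\Gamma} B_j$, whence $L(i) = X^{(\Gamma)}(i)$. In the decreasing case one similarly has $I_{\Gamma_n} \downarrow I_\Gamma$, so $L(i) \geq X^{(\Gamma)}(i)$ is immediate; for the reverse inequality my plan is a pigeonhole argument. If $L(i) = 1$, then for every $n$ there exists $j_n \in I_{\Gamma_n}$ with $i \in B_{j_n}$, and since the set $J(i) := \{j \in I : i \in B_j\}$ is almost surely finite --- a standing non-triviality condition equivalent to $\nu(\{B : i \in B\}) < \infty$, whose failure would force $X(i) \equiv 1$ --- some $j^\ast \in J(i)$ must lie in $I_{\Gamma_n}$ for infinitely many $n$, and hence, by monotonicity of $(\Gamma_n)$, for all $n$. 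This gives $B_{j^\ast} \in \bigcap_n \Gamma_n = \Gamma$ and therefore $X^{(\Gamma)}(i) = 1$.

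The main obstacle is precisely this pigeonhole step in the decreasing case: it relies essentially on the local finiteness of the Poisson cloud at each site, without which the claim can genuinely fail (e.g.\ on $S = \bN$ with $\nu = \sum_{k \geq 1}\delta_{\{0,k\}}$ and $\Gamma_n = \{B : \max B \geq n\}$, where $\nu(\Gamma) = 0$ but $X^{(\Gamma_n)}(0) = 1$ almost surely for every $n$). The substantive content of the lemma is therefore the continuity of the map $\Gamma \mapsto X^{(\Gamma)}$ under monotone set-theoretic limits, granted the paper's standing local-finiteness convention on the representing measure $\nu$.
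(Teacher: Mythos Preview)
Your argument is correct and follows exactly the approach the paper has in mind: the paper's own proof is the single sentence ``This follows by the above construction and basic set theory,'' and your write-up is a careful unpacking of that sentence via the coupling~\eqref{eq:coupling_construction}.

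Your treatment of the decreasing case is more scrupulous than the paper's, and the caveat you isolate is real. The pigeonhole step genuinely needs $\nu(\cS_i)<\infty$ for each $i\in S$, and your counterexample on $S=\bN$ with $\nu=\sum_{k\geq 1}\delta_{\{0,k\}}$ shows the statement can fail otherwise. The paper does not list this as an explicit standing hypothesis, but it is implicit in every application: the processes considered (Ising plus phase, contact process upper invariant measure) all satisfy $\bP(X(i)=0)>0$, which forces $\nu(\cS_i)<\infty$ for any representing measure via~\eqref{eq: nu P connection}; and the conditioning events in Theorem~\ref{thm:PRmain} are only well-defined under this same condition. So you have correctly identified a hypothesis the paper's one-line proof silently uses, and supplied the argument it omits.
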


\begin{proof}[Proof of Lemma~\ref{lemma: transfer to limit new}] 
    This follows by the above construction and basic set theory.  
\end{proof}

Now let
\begin{equation*}
    \cP(S)^{(<\infty)} \coloneqq \bigl\{ \Delta \in \cP(S) \colon |\Delta|<\infty \bigr\}
\end{equation*}
and, for $X \in \cR(S)$ with corresponding intensity measure \( \nu,\) let 
\begin{equation*}
    \nu^{(<\infty)} \coloneqq \nu|_{\cP(S)^{(<\infty)}} \quad \text{and} \quad X^{(<\infty)} \coloneqq X^{\nu^{(<\infty)}}.
\end{equation*} 
Thus, the process $X^{(<\infty)} \in \cR(S)$ has an intensity measure that concentrates on finite sets. 
More generally, for $Q \subset S$, we consider
\begin{align}
  &\cP(S)^{(<\infty,Q)} \coloneqq \bigl\{ \Delta \in \cP(S) \colon |\Delta \cap Q|<\infty \bigr\},
  \\ &\nu^{(<\infty,Q)} \coloneqq \nu|_{\cP(S)^{(<\infty,Q)} }
  \quad \text{and} \quad 
  X^{(<\infty,Q)} \coloneqq X^{\nu^{(<\infty,Q)}}.
\end{align} 
Note that, with $Q=S$, we obtain that
\begin{equation*}
    \cP(S)^{(<\infty,S)}=\cP(S)^{(<\infty)} \quad \text{and} \quad X^{(<\infty,S)} = X^{(<\infty)}. 
\end{equation*}
The following lemma gives a first characterization of the $X^{(<\infty)}$-process.

\begin{lemma}\label{lem:PRmain}
  Let $X \in \cR(S)$ and $(Q_j)_{j \in J}$ be a partition of $S$ with $|J|<\infty$. Then $X= X^{(<\infty)}$ if and only if $X = X^{(<\infty,Q_j)}$ for each $j \in J$.
\end{lemma}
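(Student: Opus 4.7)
The proof hinges on the elementary observation that, since $J$ is finite and $(Q_j)_{j \in J}$ partitions $S$, a subset of $S$ is finite if and only if it has finite intersection with each block $Q_j$; hence
\[
    \cP(S)^{(<\infty)} = \bigcap_{j \in J} \cP(S)^{(<\infty, Q_j)}, \qquad
    \cP(S)\setminus\cP(S)^{(<\infty)} = \bigcup_{j \in J} \bigl(\cP(S)\setminus\cP(S)^{(<\infty, Q_j)}\bigr).
\]

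For the forward direction I would invoke the coupled construction from~\eqref{eq:coupling_construction}: the inclusion chain $\cP(S)^{(<\infty)}\subseteq \cP(S)^{(<\infty,Q_j)}\subseteq \cP(S)$ yields the pointwise sandwich $X^{(<\infty)}\leq X^{(<\infty,Q_j)}\leq X$ under the coupling. If $X$ and $X^{(<\infty)}$ agree in distribution, the sandwich collapses (monotonically coupled processes with identical end-laws are almost surely equal), giving $X = X^{(<\infty,Q_j)}$ for each $j \in J$.

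For the backward direction I would work at the level of intensity measures via the standard Poisson cylinder identity
\[
    \bP\bigl(X^{\nu|_{A}}(T) \equiv 0\bigr) = \exp\bigl(-\nu\bigl(\{\Delta \in A : \Delta \cap T \neq \emptyset\}\bigr)\bigr),
\]
valid for any $A \subseteq \cP(S)$ and every finite $T \subset S$. The hypothesis $X = X^{(<\infty,Q_j)}$ forces the corresponding cylinder masses to agree for every finite $T$ and every $j$, i.e., $\nu$ places no visible mass on $Q_j$-infinite sets hitting $T$. A union bound over the finitely many $j \in J$, using the second set identity above, transfers this vanishing to the complement $\cP(S)\setminus\cP(S)^{(<\infty)}$; matching the cylinder masses then yields $X = X^{(<\infty)}$.

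The delicate point in the backward direction is reading a vanishing-mass statement off the cylinder-probability equality in the regime where the relevant intensities are infinite and both sides of the Poisson identity collapse to zero simultaneously. I expect this to be handled either by first restricting to finite-mass portions of $\nu$ and invoking Lemma~\ref{lemma: transfer to limit new} to pass to the weak limit, or by a direct realization-wise argument on the coupled Poisson configuration leveraging the second set identity above.
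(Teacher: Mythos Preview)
Your forward direction via the monotone sandwich is correct and is just an explicit form of the paper's one-line ``immediate from the construction''. For the backward direction you and the paper diverge: the paper asserts directly, ``by the construction'', that $X=X^{(<\infty,Q_j)}$ forces the Poisson configuration to contain a.s.\ no point $B$ with $|B\cap Q_j|=\infty$, and then intersects over the finitely many $j$ using your set identity. Your cylinder-probability route, when the relevant masses are finite, actually \emph{justifies} the paper's bare assertion: subtracting exponents gives $\nu\bigl(\mathcal{S}_i\setminus\cP(S)^{(<\infty,Q_j)}\bigr)=0$ for each $i$, and a countable union over $i\in S$ then yields $\nu\bigl(\cP(S)\setminus\cP(S)^{(<\infty,Q_j)}\bigr)=0$.

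Your ``delicate point'', however, is a genuine obstruction, and neither proposed fix resolves it; the lemma as literally stated is false in the degenerate regime. Take $S=\{0\}\cup(\bN\times\{1,2\})$, $Q_1=\{0\}\cup(\bN\times\{1\})$, $Q_2=\bN\times\{2\}$, and for $n\ge 1$ put $\nu$-mass $1$ on each of $A_n\coloneqq\{0\}\cup(\{1,\dots,n\}\times\{1\})\cup(\bN\times\{2\})$ and $B_n\coloneqq\{0\}\cup(\bN\times\{1\})\cup(\{1,\dots,n\}\times\{2\})$. Then $A_n\in\cP(S)^{(<\infty,Q_1)}\setminus\cP(S)^{(<\infty,Q_2)}$ and symmetrically for $B_n$; every site lies in infinitely many $A_n$ and infinitely many $B_n$, so $X=X^{(<\infty,Q_1)}=X^{(<\infty,Q_2)}\equiv\bar{1}$ a.s., yet no set in the support of $\nu$ is finite and $X^{(<\infty)}\equiv\bar{0}$. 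The paper's realization-wise claim fails here for the same reason your cylinder identity does: Poisson points with infinite $Q_j$-intersection do occur, they are just invisible in $X$ because every site is already a.s.\ covered. Both arguments become correct under the additional hypothesis $\bP(X(i)=0)>0$ for all $i$ (equivalently $\nu(\mathcal{S}_i)<\infty$), which holds in every application in the paper and is implicitly needed anyway for the conditionings in Theorem~\ref{thm:PRmain} to make sense; under that hypothesis your cylinder argument is complete as written, with no further fix required.
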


Our main result of this subsection is the following characterization of Poisson representable processes concentrating on finite sets.

\begin{theorem}\label{thm:PRmain}
  Let $X \in \cR(S),$ and let $(Q_j)_{j \in J}$ be a partition of $S$ with $|J|<\infty$. 
  Then ${X = X^{(<\infty)}}$ if and only if, for any increasing sequence $(S_n)_{n\geq 1}$ of sets such that $ \bigcup_{n\geq 1} S_n = S$, it holds that, with respect to weak convergence, 
  \begin{equation}\label{thm:PRmainEQ}
    \lim_{n \rightarrow \infty} \lim_{m\rightarrow \infty} \bP \bigl( X \in \cdot \mid X(Q_i \cap S_m\cap S_n^c) \equiv 0 \bigr) 
  = \bP (X \in \cdot ).
  \end{equation}

\end{theorem}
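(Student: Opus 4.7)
The plan is to reduce~\eqref{thm:PRmainEQ} to a statement about restricted Poisson-representable processes by first identifying the conditional distribution on the left-hand side in terms of the Poisson construction. The key observation is that, for any $T \subseteq S$, the event $\{X(T) \equiv 0\}$ coincides with the event that no Poisson atom of $Y^\nu$ lies in $\Gamma_T^c := \{B \in \cP(S)\setminus\{\emptyset\} : B \cap T \neq \emptyset\}$. Writing $\Gamma_T$ for the complementary family $\{B : B \cap T = \emptyset\}$ and using that $Y^\nu$ splits into two independent Poisson processes over $\Gamma_T$ and $\Gamma_T^c$, this yields, whenever $\bP(X(T) \equiv 0) > 0$, the identity
\begin{equation}
  \bP\bigl(X \in \cdot \mid X(T) \equiv 0\bigr) \;=\; \bP\bigl(X^{(\Gamma_T)} \in \cdot \bigr).
\end{equation}
Applied with $T = Q_i \cap S_m \cap S_n^c$, the left-hand side of~\eqref{thm:PRmainEQ} becomes the law of $X^{(\Gamma_{m,n})}$, where $\Gamma_{m,n} := \{B : B \cap Q_i \cap S_m \cap S_n^c = \emptyset\}$.

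Next, I would push the limits through using Lemma~\ref{lemma: transfer to limit new}. Since $S_m \uparrow S$, the conditioning set $Q_i \cap S_m \cap S_n^c$ increases in $m$ to $Q_i \cap S_n^c$, so $\Gamma_{m,n}$ \emph{decreases} in $m$ to $\Gamma_n := \{B : B \cap Q_i \subseteq S_n\}$, and the decreasing case of the lemma gives $X^{(\Gamma_{m,n})} \Rightarrow X^{(\Gamma_n)}$ as $m \to \infty$. Subsequently, as $n \to \infty$, $\Gamma_n$ \emph{increases} to $\Gamma_\infty := \{B : B \cap Q_i \subseteq S_n \text{ for some } n\}$, and the increasing case gives $X^{(\Gamma_n)} \Rightarrow X^{(\Gamma_\infty)}$. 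Hence the mixing statement~\eqref{thm:PRmainEQ} is equivalent to the distributional identity $X^{(\Gamma_\infty)} \stackrel{d}{=} X$, for the $\Gamma_\infty$ determined by $(S_n)$ and $i$.

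From this reformulation, the two directions fall out by choosing $\nu$ or $(S_n)$ appropriately. For the forward direction, assuming $X = X^{(<\infty)}$ I replace $\nu$ by $\nu^{(<\infty)}$, so that $\nu$ is supported on finite subsets of $S$. Any finite $B$ has $B \cap Q_i$ finite and hence $B \cap Q_i \subseteq S_n$ for all sufficiently large $n$ (since $\bigcup_n S_n = S$), so $\nu(\Gamma_\infty^c) = 0$ and $X^{(\Gamma_\infty)} \stackrel{d}{=} X$ for every increasing $(S_n)$. For the converse, assuming~\eqref{thm:PRmainEQ} holds for each $i \in J$, choose $(S_n)$ to be an increasing sequence of \emph{finite} subsets of $S$ with $\bigcup_n S_n = S$ (possible since $S$ is countable). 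Then the condition ``$B \cap Q_i \subseteq S_n$ for some $n$'' becomes equivalent to ``$B \cap Q_i$ is finite'', so $\Gamma_\infty = \cP(S)^{(<\infty, Q_i)}$ and $X^{(\Gamma_\infty)} = X^{(<\infty, Q_i)}$. The mixing hypothesis then yields $X \stackrel{d}{=} X^{(<\infty, Q_i)}$ for every $i \in J$, and Lemma~\ref{lem:PRmain} concludes that $X = X^{(<\infty)}$.

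The delicate step is the conditional-law identity in the first paragraph: one must handle cases where $\bP(X(T) \equiv 0)$ may be zero (via regular conditional distributions, or by restricting attention to the $T$'s arising in~\eqref{thm:PRmainEQ}), and rigorously justify the independent Poisson-splitting over $\Gamma_T$ and $\Gamma_T^c$ at the level of intensity measures. Once that identity is in place, the remainder is bookkeeping, with the decisive idea being to take the $S_n$'s finite in the converse direction so that $\Gamma_\infty$ coincides exactly with $\cP(S)^{(<\infty, Q_i)}$.
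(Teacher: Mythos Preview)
Your proposal is correct and follows the same route as the paper's proof: identify the conditional law via the Poisson restriction/splitting as the law of $X^{(\Gamma_T)}$, pass the double limit through using Lemma~\ref{lemma: transfer to limit new}, and finish with Lemma~\ref{lem:PRmain}. If anything, you are more careful than the paper in separating the forward direction (where $\Gamma_\infty \supseteq \cP(S)^{(<\infty)}$ suffices for arbitrary exhausting $(S_n)$) from the converse (where taking the $S_n$ finite forces $\Gamma_\infty = \cP(S)^{(<\infty,Q_i)}$ exactly).
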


\begin{proof}
  This is a direct consequence of Lemma~\ref{lemma: transfer to limit new} and Lemma~\ref{lem:PRmain}. Indeed, by the so-called restriction theorem for Poisson processes ~\cite[Theorem 5.3]{LastPenrose}, for any \( j \in J, \) it holds that  
  $X^{\nu|_{\cP( S\setminus(Q_j \cap S_m\cap S_n^c))}}$ equals $\bP \bigl( X \in \cdot \mid X(Q_j\cap S_m \cap S_n^c) \equiv 0 \bigr)$ in distribution. 
  Now note that for any fixed \( j \in J \) and \( n \geq 1, \) the sequence $\bigl(\mathcal{P}(S\setminus(S_m \cap Q_j \cap S_n^c)) \bigr)_{m \geq 1}$ is decreasing and converges to $\mathcal{P}(S\setminus( Q_j \cap S_n^c)).$
  Moreover, for fixed \( j \in J, \) the sequence $\bigl( \mathcal{P}(S\setminus( Q_j \cap S_n^c) )\bigr)_{n \geq 1}$ is increasing and converges to $\cP(S)^{(<\infty,Q_j)}$. Hence, by Lemma~\ref{lemma: transfer to limit new}, the identity \eqref{thm:PRmainEQ} is equivalent to that $X=X^{(<\infty,Q_j)}$ for each $j \in J$. The conclusion thus follows by Lemma~\ref{lem:PRmain}.
\end{proof}

\begin{proof}[Proof of Lemma~\ref{lem:PRmain}]
It is immediate from the construction that, if $X= X^{(<\infty)}$, then also $X = X^{(<\infty,Q_j)}$ for all $j \in J$.
For the other direction, note that if $X = X^{(<\infty,Q_j)}$ for each $j \in J$, then a.s., by the construction as in~\eqref{eq:coupling_construction}, there are no element $B_i \subset S$ such that, for some $j \in J$, $|B_i \cap Q_j| = \infty$. Hence, a.s.\ $|B_i| <\infty$ for all $i \in I$, implying in particular that  $X= X^{(<\infty)}$.
\end{proof}

\begin{remark}
    For $X=X^{\nu} \in \cR(S)$ we note that $\nu= \nu^{(<\infty)} + \nu^{(=\infty)}$ where $ \nu^{(=\infty)} \coloneqq \nu|_{\cP(S) \setminus \cP(S)^{(<\infty)}}$. 
    As concluded in~\cite[Theorem 7.3]{JeffNinaMalin2024} for stationary processes on $\bZ$, the process $X$ is ergodic if and only if $\nu^{(=\infty)}$ concentrates on sets with zero density. Moreover, if $\nu^{(=\infty)}$ assigns no weight to any subset of $\cP(S)$, then~\cite[Theorem 7.5]{JeffNinaMalin2024} implies that the process $X^{\nu}=X^{\nu^{(<\infty)}}$ is a Bernoulli Shift, i.e., a factor of i.i.d.’s. As argued in~\cite[Theorem 7.7]{JeffNinaMalin2024} this latter result extends to processes on $\bZ^d$ with $d\geq 2$.
\end{remark}

\subsection{(Lack of) mixing for the contact process and the Ising model}\label{ssection:cmp}

Our intuition behind why neither the plus phase of the Ising model nor the upper invariant measure of the contact process are Poisson representable in their phase transition regimes is that their conditional distributions are, in some sense, "too correlated" for having a Poissonian construction satisfying the so-called restriction theorem~\cite[Theorem 5.3]{LastPenrose}. 
To make this intuition into a rigorous proof, we apply the characterization given by Theorem~\ref{thm:PRmain} for Poisson representable processes to construct a contradiction.

To conclude Theorem~\ref{theorem:Ising2}, we first recall two properties for the so-called Schonmann projection, \( Z=(Z(i))_{i \in \bZ} \), of the Ising model on $\bZ^2$ obtained from $X \sim \mu_{\beta}^+$ by letting $Z(i)=X(i,0)$.

\begin{lemma}\label{lem:Schonmann}
  For any $\beta>\beta_c$, with respect to weak convergence, the following holds.
  \begin{enumerate}[label=(\alph*)]
    \item \label{item: two sided nomixing} \( \displaystyle \lim_{n \rightarrow \infty} \lim_{m \rightarrow \infty} \mu_{\beta}^{+} \pigl( Z \in \cdot \mid Z\bigl([-m,m] \setminus [-n,n]\bigr) \equiv -1  \pigr) = \mu_{\beta}^{-}(Z \in \cdot )\)
    \item \label{item: first wetting}\( \displaystyle \lim_{n \rightarrow \infty} \lim_{m \rightarrow \infty} \mu_{\beta}^{+} (Z \in \cdot \mid Z\bigl( (-m,-n) \bigr) \equiv -1 ) = \mu_{\beta}^{+} (Z \in \cdot )\) 
  \end{enumerate}  
\end{lemma}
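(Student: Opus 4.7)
The plan is to reduce both parts to classical facts about the 2D Ising plus phase: complete wetting for part (a), and a one-sided decoupling for part (b). I would first handle the inner limit in $m$ by FKG monotonicity, and then attack the outer limit in $n$ via interface analysis specific to $\beta > \beta_c$. In both parts, the inner $m$-limit is handled uniformly: the conditioning event $\{Z \equiv -1 \text{ on } A_{n,m}\}$, with $A_{n,m}$ equal to $[-m,m] \setminus [-n,n]$ in (a) or $(-m,-n)$ in (b), is a decreasing cylinder event of positive probability. Hence, the conditional measures are monotone decreasing in FKG (stochastic) order as $m$ grows, and by compactness of $\{0,1\}^{\bZ^2}$ they converge weakly to $\mu_\beta^+(\cdot \mid Z \equiv -1 \text{ on } A_{n,\infty})$, where $A_{n,\infty} = \bZ \setminus [-n,n]$ for (a) and $A_{n,\infty} = (-\infty,-n)$ for (b).

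For the outer $n$-limit in part (a), I would appeal to Schonmann's classical non-Gibbsianness argument for the projection of $\mu_\beta^+$ onto a horizontal line. The key ingredient is complete wetting of the 2D Ising model at any $\beta > \beta_c$, due to Abraham: imposing $-1$ on $\bZ \setminus [-n,n]$ creates a $-$-phase bubble in the bulk whose size grows with $n$. Consequently, the marginal of the conditional measure on any fixed finite window of the line converges to the corresponding marginal of $\mu_\beta^-$ as $n \to \infty$, giving the claim.

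For part (b), the conditioning on $Z \equiv -1$ on $(-\infty,-n)$ creates only a one-sided wetting region whose influence on spins positioned well to the right of $-n$ should vanish as $n \to \infty$. FKG monotonicity in $n$ yields an FKG-increasing limit $\nu_\infty$ that is stochastically dominated by $\mu_\beta^+$. To conclude equality, I would construct a coupling of $\mu_\beta^+$ with the conditioned measure so that the two configurations agree on any fixed finite window with probability tending to $1$. This coupling should be feasible because the right boundary of the (semi-infinite) $-$-bubble is localized near $x = -n$ with controlled fluctuations, so its influence on fixed finite windows decays as $n \to \infty$.

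The main obstacle is the interface analysis in both parts, with part (b) being the more delicate of the two. Part (a) is well-developed in the literature, though a self-contained proof still requires nontrivial wetting input. Part (b) is a one-sided decoupling that is not covered by standard Gibbsian mixing (indeed, the Schonmann projection is itself non-Gibbsian); it is therefore plausible that the argument requires an explicit coupling built from interface localization, random-line representations, and the domain Markov property of the Ising model.
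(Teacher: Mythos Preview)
The paper does not prove this lemma; it simply cites existing results, attributing part~(a) to \cite[Lemma~1]{SchonmannNGibbs1989} and part~(b) to \cite[Theorem~3.3]{BethuelsenConache2018}. Your plan is aligned with what those references do.

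For~(a), you correctly identify Schonmann's complete-wetting mechanism as the key input. One small clarification: the wetting argument shows that for \emph{each} fixed $n$, the two-sided conditioning already forces the configuration near the origin to look like the minus phase (the $+/-$ interfaces created by the two half-lines of $-1$'s are entropically repelled from the axis), so the outer $n$-limit is not really where the work lies; the content is that the effect persists uniformly in $n$.

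For~(b), you correctly flag the one-sided statement as the more delicate one and outline an interface-localization/coupling strategy. This is essentially the route taken in \cite{BethuelsenConache2018}, which combines FKG monotonicity, the domain Markov property, and quantitative random-line/surface-tension estimates to show that the influence of the half-line conditioning does not propagate to the right. Your sketch stops short of the actual estimates, but the architecture is right, and the paper likewise delegates the details to that reference rather than reproducing them.
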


That Lemma~\ref{lem:Schonmann}\ref{item: two sided nomixing} holds follow by~\citet[Lemma 1]{SchonmannNGibbs1989}, whereas~Lemma~\ref{lem:Schonmann}\ref{item: first wetting} is a direct consequence of~\citet[Theorem 3.3]{BethuelsenConache2018}. From these properties and Theorem~\ref{thm:PRmain}, we conclude that $Z$ cannot be Poisson representable. 
 
\begin{proposition}
\label{prop:schonmann}
    Let $\beta>\beta_c$. Then, identifying $-1$ with $0$,  
  it holds that $Z \notin \cR(\bZ)$.  
\end{proposition}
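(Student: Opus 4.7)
The plan is to assume for contradiction that $Z \in \cR(\bZ)$ and to apply Theorem~\ref{thm:PRmain} twice, with two different finite partitions of $\bZ$ but with the same convenient exhaustion $S_n \coloneqq [-n,n]\cap\bZ$. As is clear from the proof of the theorem, the iterated weak limits on the left of \eqref{thm:PRmainEQ} do not depend on the chosen exhaustion (the increasing families $\cP(S\setminus(Q_j \cap S_n^c))$ always have union $\cP(S)^{(<\infty,Q_j)}$), so verifying \eqref{thm:PRmainEQ} along this single sequence is enough to conclude $Z = Z^{(<\infty)}$ from the "if" direction.

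First, I would take the trivial partition $|J|=1$ with $Q_1 = \bZ$. Then the inner conditioning event becomes $\{Z([-m,m]\setminus[-n,n]) \equiv 0\}$, and the iterated limit on the left of \eqref{thm:PRmainEQ} is exactly the one computed in Lemma~\ref{lem:Schonmann}\ref{item: two sided nomixing}: it equals $\mu_\beta^-$ on $Z$ (after the identification $0 \leftrightarrow -1$). Since $\mu_\beta^+ \neq \mu_\beta^-$ for $\beta > \beta_c$, as is already visible at the single-site marginal, the identity \eqref{thm:PRmainEQ} fails for this partition, and Theorem~\ref{thm:PRmain} forces $Z \neq Z^{(<\infty)}$.

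Second, I would apply Theorem~\ref{thm:PRmain} with the bipartition $Q_1 = \bZ \cap (-\infty, 0]$, $Q_2 = \bZ \cap [1,\infty)$. For this choice the inner conditionings split into two one-sided events, $\{Z(\{-m,\ldots,-n-1\}) \equiv 0\}$ and $\{Z(\{n+1,\ldots,m\}) \equiv 0\}$, each of which recedes to infinity under $\lim_n\lim_m$. Lemma~\ref{lem:Schonmann}\ref{item: first wetting} handles the first case directly—the one-point discrepancy between $\{-m,\ldots,-n-1\}$ and the interval $(-m,-n)$ appearing there is negligible as $m\to\infty$—and the second case reduces to the first via the reflection invariance of $\mu_\beta^+$ on $\bZ^2$. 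Both limits therefore equal $\mu_\beta^+ = \bP(Z \in \cdot)$, so \eqref{thm:PRmainEQ} holds for this bipartition and Theorem~\ref{thm:PRmain} now forces $Z = Z^{(<\infty)}$.

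The two conclusions contradict each other, so $Z \notin \cR(\bZ)$. The only delicate point in this plan is the observation highlighted at the outset, namely that the iterated limit in \eqref{thm:PRmainEQ} is independent of the exhaustion $(S_n)$; once this is granted, the rest is a direct combination of the two available mixing statements for the Schonmann projection, together with a minor endpoint-bookkeeping when matching our one-sided conditioning sets to those in Lemma~\ref{lem:Schonmann}\ref{item: first wetting}.
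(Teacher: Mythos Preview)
Your proposal is correct and follows essentially the same approach as the paper's own proof: assume $Z\in\cR(\bZ)$, use the trivial partition together with Lemma~\ref{lem:Schonmann}\ref{item: two sided nomixing} to get $Z\neq Z^{(<\infty)}$, and use a left/right bipartition together with Lemma~\ref{lem:Schonmann}\ref{item: first wetting} and reflection symmetry to get $Z=Z^{(<\infty)}$, reaching a contradiction. The paper uses the bipartition $(\bN,\bZ\smallsetminus\bN)$ rather than your $((-\infty,0],[1,\infty))$, and it does not spell out the exhaustion-independence or endpoint-bookkeeping remarks you add, but the arguments are otherwise identical.
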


\begin{proof}
  Assume for contradiction that $Z \in \cR(\bZ)$ and, for \( n \geq 1, \) let $S_n \coloneqq [-n,n].$ 
  Then, considering the the trivial partition where $(Q_j)_{j\in J} = (Q_1)$ with $Q_1=\bZ$, by Lemma~\ref{lem:Schonmann}\ref{item: two sided nomixing} and Theorem~\ref{thm:PRmain}, we have that $Z \neq Z^{(<\infty)}$ since \( \mu_{\beta}^+ \neq \mu_{\beta}^-\) when $\beta>\beta_c$.  
  
  On the other hand, consider the partition $(Q_1,Q_2) \coloneqq (\bN, \bZ\smallsetminus \bN)$ of $\bZ.$ From Lemma~\ref{lem:Schonmann}\ref{item: first wetting} and the symmetry of the model, it follows that~\eqref{thm:PRmainEQ} holds with respect to both $Q_1$ and $Q_2$.
  Thus, Theorem~\ref{thm:PRmain} says that $Z = Z^{(<\infty)}$, leading to a contradiction. Consequently, it cannot be that $Z \in \cR(\bZ)$.  
\end{proof}

\begin{proof}[Proof of Theorem~\ref{theorem:Ising2}]
  Let $X\sim \mu_{\beta}^+$ on $\bZ^2$ and assume for contradiction that $X \in \cR(\bZ^2)$. Then~\cite[Lemma 2.14(a)]{JeffNinaMalin2024} says that, for any $\Lambda \subset \bZ^2$, we have \( X|_{\Lambda} \in \cR(\Lambda).\) However, this stands in contradiction to Proposition~\ref{prop:schonmann}. Thus, $X \notin \cR(\bZ^2)$. 
\end{proof}

The following remarks detail further consequences and possible extensions of the above arguments for the Ising model.

\begin{remark}\label{rem:Ising1} 
The plus-phase of the Ising model equals the limit (with respect to weak convergence) of $\mu_{n,\beta}^+$ as $n\rightarrow \infty$, where \( \mu_{n,\beta}^+\) is the Ising model with interaction parameter \( \beta \) on \( \Lambda_n \coloneqq [-n,n]^d\cap \mathbb{Z}^d\) with plus boundary conditions. Since, by Theorem~\ref{theorem:Ising2}, we have that $X \notin \cR(\bZ^2)$, it therefore follows by~\cite[Lemma 2.22]{JeffNinaMalin2024} for $d=2$ that $\mu_{n,\beta}^+ \notin \cR([-n,n]^2)$ for all sufficiently large \( n. \)
 \end{remark}

 \begin{remark}\label{rem:Ising2}
  Let $\alpha \in (0,1)$ and consider the process $X \sim \alpha \mu_{\beta}^+ + (1-\alpha)\mu_{\beta}^-$ on $\{0,1\}^{\bZ^2}$. Then, the statement of Lemma~\ref{lem:Schonmann} still holds for the corresponding projection onto $\bZ \times \{0\}$. Therefore, the proof of Proposition~\ref{prop:schonmann} and thus also that of Theorem~\ref{theorem:Ising2}, extends to this case. As a consequence, $X \notin \cR(\bZ^2)$. An alternative proof of this statement, which also extends to higher dimensional lattices, is given in the next section. 
\end{remark}

\begin{remark}\label{rem:Ising3}
  Generally, if $X \sim \mu_{\beta}^+$ is Poisson representable, then $X^{(<\infty)}\sim \mu_{\beta}^-$ is Poisson representable too, as follows by Lemma~\ref{lemma: transfer to limit new}. Particularly, in the uniqueness phase $\beta<\beta_c$ (and also at $\beta_c$ for the model on $\bZ^d$), if $X$ is Poisson representable, then, by Lemma~\ref{lemma: transfer to limit new}, its intensity measure necessarily concentrates on finite sets.
  On the other hand, for any value of $\beta>0$, if the minus phase of the Ising model is not Poisson representable, then neither is the plus phase nor any other phase. Unfortunately, we do not see how the arguments of this section can be used to determine whether the minus phase is Poisson representable or not.
\end{remark}

\begin{remark}\label{rem:Ising4} 
  The arguments of this subsection may be extended to other graphs, as we outline next, focussing on the case where $S=\bZ^d$, $d\geq 2$. 
  
  The contrasting mixing behavior seen in Proposition~\ref{prop:schonmann} are well known for models from statistical mechanics.  
  In particular, Lemma~\ref{lem:Schonmann}\ref{item: two sided nomixing} was derived in~\cite{SchonmannNGibbs1989} to conclude that the Schonmann projection $(Z(i))_{i \in \bZ}$ is non-Gibbsian. The latter conclusion was later extended to the projection of the $d$-dimensional Ising model onto a $d-1$ layer in~\cite{FernandezPfisterNGibbs1997}. Presumably, such projections satisfy the natural extension of~Lemma~\ref{lem:Schonmann}\ref{item: two sided nomixing} too. %
  On the other hand, in~\cite{MaesRedigMoffaertNGibbs1999} a general approach was laid out for proving that projections of Gibbs measure onto a sufficiently decimated $(d-1)$-dimensional layer preserves the Gibbsian property. Their approach also implies that these models have mixing properties reminiscent of those of Lemma~\ref{lem:Schonmann}\ref{item: first wetting}.  
  More concretely, it was concluded in~\cite[Theorem 4.2]{MaesRedigMoffaertNGibbs1999} that, for $X\sim \mu^+_{\beta}$ on $\bZ^2$ and $\beta$ sufficiently large, the corresponding Schonmann projection $\bigl(Z(i)\bigr)_{i \in \bZ}$ satisfy~\eqref{thm:PRmainEQ}  with respect to the partitioning $Q\coloneqq(Q_0,Q_1,Q_2,Q_3)$ given by  
  \begin{equation}
    Q_i \coloneqq \{ x \in \bZ \colon x \equiv i \mod 4 \},\quad i=0,1,2,3,
  \end{equation}
  (see also~\cite[Theorem 1]{LorincziVelde1994} for similar results). 
  Presumably, this approach can also be extended to higher dimensional lattices in the supercritical regime $\beta>\beta_c$. (Theorem~\ref{theorem:cp2} below provides such an extension for the supercritical contact process). In that case, combined with the observations of the previous paragraph and the arguments of this section, this would imply that $X\sim \mu_{\beta}^+$ on $\{0,1\}^{\bZ^d}$ is not Poisson representable. 
\end{remark}

For the contact process, we prove in Section~\ref{sec:CP} that $\mu_{\lambda}$ satisfies contrasting mixing behavior similar to those seen in Proposition~\ref{prop:schonmann} and described precisely in the following statement.

\begin{theorem}\label{theorem:cp2}
    Let $X\sim \mu_{\lambda}$ on $\{0,1\}^{\bZ^d}$, where \( d \geq 1\) and $\lambda > \lambda_c$.  
  \begin{enumerate}[label=(\alph*)]
    \item \label{item: cp2a}For any $x \in \bZ^d$,
    \begin{equation}\label{eq:lem:cp111111111111}
      \lim_{n \rightarrow \infty} \lim_{m\rightarrow \infty}  \mu_{\lambda}\pigl( X(x) = 0 \mid X\bigl( [-m,m]^{d-1} \times (-m, -n) \bigr) \equiv 0 \pigr)  
       = \mu_{\lambda}\bigl(X(x)=0 \bigr).
    \end{equation} 
    \item \label{item: cp2b}For any $n \in \bN$, 
    \begin{equation}\label{eq:lem:cp1.2}
     \lim_{m\rightarrow \infty} \mu_{\lambda}\pigl(X\bigl( [-n,n]^d \bigr) \equiv 0  \mid X\bigl([-m,m]^{d} \setminus [-n,n]^d \bigr) \equiv 0 \pigr) = 1.
    \end{equation} 
  \end{enumerate}
\end{theorem}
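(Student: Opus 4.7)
The plan is to prove parts (a) and (b) by distinct arguments, both making essential use of the graphical representation and self-duality of the contact process. Throughout I write $\eta^A$ for the contact process started from $A \subset \bZ^d$ constructed on the standard graphical representation, and set $B_m := [-m,m]^d$ and $A_{n,m} := B_m \setminus [-n,n]^d$.

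For part (a), the plan is to establish a uniform spatial mixing estimate for $\mu_\lambda$: for any $\Lambda \subset \bZ^d$,
\begin{equation*}
  \bigl| \mu_\lambda\bigl(X(x) = 0 \mid X(\Lambda) \equiv 0\bigr) - \mu_\lambda\bigl(X(x) = 0\bigr) \bigr| \leq C e^{-c\,\mathrm{dist}(x, \Lambda)}.
\end{equation*}
The two ingredients I would combine are finite speed of propagation for the contact process, which places the backward dual cluster from $(x,0)$ inside a space-time cylinder of radius $C n$ up to probability $C'e^{-cn}$, and exponential decay of truncated correlations of $\mu_\lambda$ in the supercritical regime, a consequence of the Bezuidenhout--Grimmett block construction. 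Taking $m \to \infty$ first (noting that for fixed $n$ the conditioning events are decreasing in $m$, so the conditional probabilities converge), and then $n \to \infty$, completes (a).

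For part (b), duality $\mu_\lambda(X(\Lambda) \equiv 0) = \bP(\eta^\Lambda \text{ dies out})$ together with the graphical-coupling identity $\eta^{B_m} = \eta^{A_{n,m}} \cup \eta^{[-n,n]^d}$ rewrites the conditional probability as $\bP(\eta^{[-n,n]^d} \text{ dies} \mid \eta^{A_{n,m}} \text{ dies})$, so it suffices to show that $\bP(\eta^{[-n,n]^d} \text{ survives} \mid \eta^{A_{n,m}} \text{ dies}) \to 0$ as $m \to \infty$. The intuition is that in the supercritical regime the large set $A_{n,m}$ produces a cluster which rapidly fills in the hole $[-n,n]^d$, after which the two processes $\eta^{A_{n,m}}$ and $\eta^{B_m}$ coincide under the graphical coupling and thus die together. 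Letting $T_{\mathrm{coup}} := \inf\{t : \eta^{A_{n,m}}_t = \eta^{B_m}_t\}$ and $T_{\mathrm{ext}}$ be the extinction time of $\eta^{A_{n,m}}$, the bad event $\{\eta^{A_{n,m}} \text{ dies}, \eta^{[-n,n]^d} \text{ survives}\}$ is contained in $\{T_{\mathrm{coup}} > T_{\mathrm{ext}}\}$, which I would bound by splitting at an auxiliary threshold $t = t(m)$:
\begin{equation*}
  \bP(T_{\mathrm{coup}} > T_{\mathrm{ext}}) \leq \bP(T_{\mathrm{ext}} \leq t) + \bP(T_{\mathrm{coup}} > t).
\end{equation*}
The second term admits an exponential bound $C e^{-c(t-n)}$ from the block construction, reflecting that the annulus-cluster invades the hole at positive speed with exponential tails.

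The main obstacle I expect lies in the first term: I need $\bP(T_{\mathrm{ext}} \leq t)/\bP(\eta^{A_{n,m}} \text{ dies out}) \to 0$ for a suitable $t = t(m)$. Intuitively this captures that extinction from a very large initial set is a slow large-deviation event, so \emph{early} extinction is comparatively much rarer; making this quantitative should again proceed through the oriented-percolation picture supplied by the block construction, where the lifetime of the surviving cluster can be analysed via sub-additive-type arguments. Balancing $t$ (for instance, logarithmic in $m$) then yields the vanishing conditional probability and hence part (b).
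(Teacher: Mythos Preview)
Your approach to part~(a) contains a fatal error: the uniform mixing estimate
\[
\bigl| \mu_\lambda\bigl(X(x)=0 \mid X(\Lambda)\equiv 0\bigr) - \mu_\lambda\bigl(X(x)=0\bigr)\bigr| \le C e^{-c\,\mathrm{dist}(x,\Lambda)}
\]
is simply \emph{false} for the supercritical contact process, and in fact is contradicted by part~(b) of the very theorem you are proving. Take $x=o$ and $\Lambda = B_m\setminus B_n$, so that $\mathrm{dist}(o,\Lambda)=n+1$. Part~(b) says that $\mu_\lambda(X(o)=0\mid X(\Lambda)\equiv 0)\to 1$ as $m\to\infty$, whereas $\mu_\lambda(X(o)=0)<1$; hence the left side tends to $\mu_\lambda(X(o)=1)>0$ while your bound asserts it is at most $Ce^{-c(n+1)}$ uniformly in $m$. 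Exponential decay of \emph{truncated correlations} does not yield conditional mixing, precisely because the conditioning event $\{X(\Lambda)\equiv 0\}$ has probability $e^{-\Theta(|\Lambda|)}$. The paper's argument for~(a) exploits the \emph{one-sided} (half-space) geometry of the conditioning set: via the Liggett--Steif domination result, the conditional law still dominates a non-degenerate Bernoulli product measure, and this is then combined with the graphical construction and the complete convergence theorem. The directional structure is essential; no isotropic mixing bound is available.

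Your approach to part~(b) has a genuine gap that you already flag but cannot close with the proposed balancing. The denominator $\bP(\eta^{A_{n,m}}\text{ dies}) = \mu_\lambda(X(A_{n,m})\equiv 0)$ is of order $e^{-\Theta(m^d)}$, so for the second term $\bP(T_{\mathrm{coup}}>t)/\bP(T_{\mathrm{ext}}<\infty)$ to vanish you would need $t\gg m^d$. But then the first term $\bP(T_{\mathrm{ext}}\le t\mid T_{\mathrm{ext}}<\infty)$ tends to~$1$, not~$0$: conditioned on extinction, the process from a large set behaves subcritically and dies in time $O(\log|A_{n,m}|)$, not $\Theta(m^d)$. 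Moreover, your coupling intuition (``the annulus cluster invades the hole'') is exactly backwards under the conditioning $\{\eta^{A_{n,m}}\text{ dies}\}$, where the graphical representation is tilted toward extinction and no invasion occurs. The paper proceeds along an entirely different route: it first uses the generator identity $\int L_\lambda g\,d\mu_\lambda = 0$ with $g = \mathbbm{1}\{X(\Lambda_m)\equiv 0\}$ to show $\mu_\lambda(X(o)=1\mid X(\Lambda_m\setminus\{o\})\equiv 0)\to 0$ (the boundary-to-volume ratio $|\partial\Lambda_m|/|\Lambda_m|\to 0$ is what drives this), and then upgrades from singletons to boxes by a short induction together with a graphical-representation argument ruling out the ``all ones on $\Lambda_n$'' alternative.
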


\begin{remark}
   For processes satisfying the d-FKG property, limits as those in Theorem~\ref{theorem:cp2} are well-defined. To see this, recall \eqref{eq:dfkg} and note that, for $\Delta \subset S$ finite and any decreasing event $A$ on $\{0,1\}^{\Delta}$,
   \begin{equation}
       \bP \bigl(A \mid X(I_n) \equiv 0  \bigr)
   \end{equation}
   is increasing in $n$ for any increasing sequence $(I_n)$ contained in $S\setminus \Delta$.
\end{remark}

Now, armed with Theorem~\ref{thm:PRmain} and Theorem~\ref{theorem:cp2}, we move on to the proof of the main result for the contact process; Theorem~\ref{theorem:cp1}. 

\begin{proof}[Proof of Theorem~\ref{theorem:cp1}] 
  Let $X \sim \mu_{\lambda}$ where $\mu_{\lambda}$ is the upper invariant measure of the contact process on $\bZ^d$, $d\geq 1$ and $\lambda>0$.

  If $\lambda < \lambda_c$, then by definition $\mu_{\lambda}= \delta_{\bar{0}}$ and thus assigns all weight to the all zeros configuration. It is also well-known that 
  $\mu_{\lambda_c}= \delta_{\bar{0}}$, see e.g.~\ \cite[Theorem 2.25]{LiggettSIS1999}. Hence, for $\lambda \leq \lambda_c$, it follows that \( X \in \mathcal{R}(\bZ^d)\) with corresponding measure \( \nu \equiv 0.\) 
  
  Now, consider the more interesting case that $\lambda > \lambda_c$. 
  Then Theorem~\ref{theorem:cp2} stands in contrast to Theorem~\ref{thm:PRmain}, from which we conclude that $X \notin \cR(\bZ^d)$.  Indeed, assume that $X \in \cR(\bZ^d)$ and let $S_n=[-n,n]^d$, $n\geq 1$. 
  Then, by considering the trivial partitioning where $Q=\{ \bZ^d\}$ in Theorem~\ref{thm:PRmain}, it follows by Theorem~\ref{theorem:cp2}\ref{item: cp2b}, that $X \neq X^{(<\infty)}$ since $X$ violates \eqref{thm:PRmainEQ}. On the other hand, if we consider the partitioning $(Q_j)_{j=1}^{2d}$ of $\bZ^d$ into its quadrants, then it follows by Theorem~\ref{theorem:cp2}\ref{item: cp2a} and symmetry of the model that \eqref{thm:PRmainEQ} holds with respect to each $Q_i$, $i=1,\dots,2d$. In particular, Theorem~\ref{thm:PRmain} yields that $X = X^{(<\infty)}$, leading to the aforementioned contradiction. Consequently, it cannot be that $X \in \cR(\bZ^d)$. 
\end{proof} 

\begin{remark} 
  As previously noted, the d-FKG property is a unifying property for Poisson representable processes. This property was first introduced in~\citet{BergHaggstromKahn2006} to study certain percolation models. Notably, for any countable-infinite graph $(S,E)$ and any $p\in [0,1]$, the d-FKG property was therein shown to hold for the process $X \coloneqq \bigl(X(i)\bigr)_{i \in S}$ obtained by setting $X(i)=1$ if and only if $i$ is contained in an infinite component of the corresponding ordinary percolation process. 
  We are confident that similar reasoning as for the proof of Theorem~\ref{theorem:Ising2} can be applied to this model when $S=\bZ^2$, from which one would conclude that it is not Poisson representable when $p>p_c$. For this, presumably, the equivalence of the "one-sided mixing" of~Lemma~\ref{lem:Schonmann}\ref{item: first wetting} can be shown to hold using the ideas of~\cite[Theorem 3.3]{BethuelsenConache2018} and the large deviation bounds of~\citet[Theorem 5]{DurrettSchonmann1988}. Similarly, we also expect that the process \( X \) is not "two-sided mixing" in the sense that it satisfies the equivalent of Theorem~\ref{theorem:cp1}\ref{item: cp2b}. Moreover, we have no reason to believe that, for this model, this would be any different in higher dimensions. This would answer~\cite[Question 4, Section 8]{JeffNinaMalin2024} in the negative. 
  We also anticipate that analogous statements can be proven for the more general FK-percolation model (or even the Fuzzy Potts model), which are also covered in the work of~\cite{BergHaggstromKahn2006} and shown to satisfy the d-FKG property. 
\end{remark}

\section{Proof of Theorem~\ref{theorem: all middle Ising}}\label{section: middle ising}

In this section, we give the detailed proof of Theorem~\ref{theorem: all middle Ising}. As mentioned earlier, this relies on the general property that non-trivial translation invariant Poisson representable processes on $\bZ^d$ cannot be bimodal; see Theorem~\ref{theorem: variance and limit} below. 

\subsection{Impossibility of bimodality for Poisson representable processes}

In~\cite[Theorem 5.2]{JeffNinaMalin2024}, it was shown that the Curie-Weiss model, i.e., the Ising model on a complete graph on \( n \) vertices, is not in \( \mathcal{R}\) for any \( \beta> \beta_c \) and \( n\) sufficiently large. The main idea of the proof of this result was to show that if \( X = (X_1,\dots, X_n ) \) is permutation invariant and \( X \in \mathcal{R}, \) then \( \bar X \) cannot be bimodal in a certain sense (see~\cite[Theorem 5.1]{JeffNinaMalin2024}), where \( \bar X \coloneqq (X_1+X_2+\dots+ X_n)/n.\) It is natural to ask if the assumption on permutation invariance can be loosened in this result to instead e.g.\ assume only translation invariance. 
The following theorem concludes this in the affirmative for processes \( X \in \cR(\bZ^d)\) with $d\geq 1$. 
For this, for any $n\ge 1,$ we denote by \( \bar X_n \coloneqq |\Lambda_n|^{-1}\sum_{i \in \Lambda_n} X(i) \), where we recall that \( \Lambda_n \coloneqq [-n,n]^d\cap \mathbb{Z}^d\).

\begin{theorem}\label{theorem: variance and limit}
  Let $d\geq 1$, and let \( X \in \cR(\bZ^d)\) be translation invariant.  
   If, for some \( c \in (0,1)\),
  \begin{equation}\label{eq: limit assumption}
    \lim_{n\to \infty} \bP(\bar X_n \geq c) = 0,
  \end{equation}  
  then  
   $  \lim_{n \to \infty} \Var(\bar X_n) = 0$.
\end{theorem}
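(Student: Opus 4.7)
The plan is to show, in two steps, that the hypothesis forces $\Var(\bar X_n) \to 0$. First, I argue that the Poisson intensity $\nu$ representing $X = X^\nu$ must concentrate on subsets of $\bZ^d$ of zero upper density. Second, I use the Poisson covariance formula together with translation invariance to deduce $\Var(\bar X_n) \to 0$ via dominated convergence on the finite measure $\nu|_{\{B \ni 0\}}$, whose total mass $\lambda = -\log(1 - p)$ is finite because $p = \bE[X(0)] \leq c < 1$.

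For the first step, suppose for contradiction that $\nu(A_\delta) > 0$ for some $\delta > 0$, where $A_\delta := \{B \in \cP(\bZ^d) : \liminf_n |B \cap \Lambda_n|/|\Lambda_n| \geq \delta\}$ is a translation-invariant measurable set. The coupling~\eqref{eq:coupling_construction} with $\Gamma = A_\delta$ produces a Poisson representable sub-process $X^{(A_\delta)} \leq X$ pointwise with intensity $\nu|_{A_\delta}$, so the hypothesis transfers via $\bar X_n^{(A_\delta)} \leq \bar X_n$. A direct Poisson covariance computation gives $\Var(\bar X_n^{(A_\delta)}) \geq e^{-2\lambda_\delta}|\Lambda_n|^{-2}\int |B \cap \Lambda_n|^2 \, d\nu|_{A_\delta}(B)$, whose $\liminf_n$ is at least $e^{-2\lambda}\delta^2 \nu(A_\delta)$ by Fatou applied to the defining inequality of $A_\delta$; combined with the universal bound $\Var \leq 1/4$, this forces $\nu(A_\delta) < \infty$. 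Then $X^{(A_\delta)}$ is realized by drawing $K \sim \mathrm{Poisson}(\nu(A_\delta))$ i.i.d.\ samples $B_1, \dots, B_K$ from $\rho := \nu|_{A_\delta}/\nu(A_\delta)$, a translation-invariant probability measure, with
\[
\bE\Bigl[|\Lambda_n|^{-1}\Bigl|\bigcup_{i=1}^k B_i \cap \Lambda_n\Bigr|\Bigr] = 1 - (1 - q)^k
\]
and $q := \rho(\{B \ni 0\}) \geq \delta$ (using translation invariance of $\rho$ and Fatou on $\liminf_n |B \cap \Lambda_n|/|\Lambda_n| \geq \delta$). By the $\bZ^d$ mean ergodic theorem, this average converges in $L^1$ to a random limit $W_k$ with the same expectation, tending to $1$ as $k \to \infty$; since $W_k \in [0, 1]$, this forces $\bP(W_k > c) \to 1$. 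Picking $k$ large enough that $\bP(W_k > c) > 0$, one obtains $\bP(\lim_n \bar X_n^{(A_\delta)} > c) \geq \bP(K = k)\bP(W_k > c) > 0$, contradicting $\bar X_n \geq \bar X_n^{(A_\delta)}$ together with the hypothesis.

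For the second step, once $\nu$ is supported on zero-density sets, the Poisson identity $\mathrm{Cov}(X(i), X(j)) = e^{-2\lambda}(e^{\phi(j-i)} - 1)$ (with $\phi(k) := \nu(\{B : 0, k \in B\})$) combined with convexity ($e^x - 1 \leq (e^\lambda - 1)x/\lambda$ on $[0, \lambda]$) and the Fubini identity $\sum_{i, j \in \Lambda_n} \phi(j - i) = \int |B \cap \Lambda_n|^2 \, d\nu(B)$ yields, after a single translation to reduce to $\{B \ni 0\}$,
\[
\Var(\bar X_n) \leq \frac{e^{-2\lambda}(e^\lambda - 1)}{\lambda}\,\frac{|\Lambda_n'|}{|\Lambda_n|}\int \frac{|B \cap \Lambda_n'|}{|\Lambda_n'|}\,\ind\{0 \in B\}\,d\nu(B),
\]
where $\Lambda_n' := [-2n, 2n]^d \cap \bZ^d$ and the prefactor is bounded uniformly in $n$. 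For each $B$ of zero density the integrand tends to $0$ pointwise, and it is bounded by $\ind\{0 \in B\}$, which is $\nu$-integrable with total mass $\lambda < \infty$; dominated convergence then gives $\Var(\bar X_n) \to 0$.

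The main obstacle is the contradiction step, which requires two careful verifications: the mean-ergodic-theorem identification of $W_k$ as a well-defined $L^1$-limit of a bounded function (needed because the joint law of the $k$ i.i.d.\ translation-invariant sets $B_1, \dots, B_k$ may fail to be ergodic under $\bZ^d$, yet the mean ergodic theorem still applies to the bounded indicator $\ind\{0 \in \bigcup_i B_i\}$), and the passage from the hypothesis $\bP(\bar X_n \geq c) \to 0$ to $\bP(\lim_n \bar X_n^{(A_\delta)} > c) = 0$ via the a.s.\ convergence of $\bar X_n^{(A_\delta)}$ on cubes (pointwise ergodic theorem for $\bZ^d$) and Fatou.
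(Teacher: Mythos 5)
Your overall strategy is genuinely different from the paper's: the paper never passes to limiting densities of individual sets, but instead proves the scale-by-scale statement \( \lim_{n\to\infty}\nu(\mathcal{S}^{n,\delta})=0 \) (Lemma~\ref{lemma: variance and limit}), using a translation-averaging argument to show that a single Poisson point of \( \nu|_{T^{n,\delta}} \) covers each fixed site of \( \Lambda_n \) with probability at least \( \delta/3^d \). Your replacement of that lemma by an ergodic-theorem argument for the normalized measure \( \rho=\nu|_{A_\delta}/\nu(A_\delta) \) is a nice idea, and most of its internal steps (finiteness of \( \nu(A_\delta) \) via Fatou and the uniform variance bound, \( q\geq\delta \), \( \mathbb{E}[W_k]\to 1 \), and the passage from a.s.\ convergence of \( \bar X_n^{(A_\delta)} \) to a contradiction with~\eqref{eq: limit assumption} via \( \bP(\liminf_n A_n)\leq\liminf_n\bP(A_n) \)) check out.

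However, there is a genuine gap between your two steps. Step~1, as written, only shows \( \nu(A_\delta)=0 \) for \( A_\delta=\{B:\liminf_n|B\cap\Lambda_n|/|\Lambda_n|\geq\delta\} \); i.e., \( \nu \)-a.e.\ \( B \) has \emph{lower} density zero. Step~2 applies dominated convergence to \( \int \bigl(|B\cap\Lambda_n'|/|\Lambda_n'|\bigr)\ind\{0\in B\}\,d\nu(B) \), which requires the integrand to tend to zero pointwise \( \nu \)-a.e., i.e.\ \emph{upper} density zero. Knowing only \( \liminf_n g_n(B)=0 \) a.e.\ gives no control on \( \lim_n\int g_n\,d\nu \) (Fatou goes the wrong way, and \( g_n \) may oscillate between scales), so as written you cannot even conclude \( \liminf_n\Var(\bar X_n)=0 \). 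Replacing \( \liminf \) by \( \limsup \) in the definition of \( A_\delta \) breaks two of your steps: the Fatou argument establishing \( \nu(A_\delta)<\infty \), and the identification of a single subsequence of scales along which all the i.i.d.\ samples \( B_1,\dots,B_k \) simultaneously have density \( \geq\delta \). Closing the gap requires an additional ingredient — either an ergodic-theoretic fact guaranteeing that \( \lim_n|B\cap\Lambda_n|/|\Lambda_n| \) exists \( \nu \)-a.e.\ for a translation-invariant \( \sigma \)-finite \( \nu \) with \( \nu(\{B\ni 0\})<\infty \) (which you neither state nor prove, and which is not routine for non-normalizable invariant measures), or a reworking of Step~1 at each finite scale \( n \), which is essentially what the paper's Lemma~\ref{lemma: variance and limit} does: it bounds \( \nu(\mathcal{S}^{n,\delta}) \) directly, after which your Step~2 integral can be split as \( \int g_n\ind\{0\in B\}\,d\nu\leq\nu(\mathcal{S}^{2n,\delta})+\delta\lambda \) and no pointwise convergence is needed.
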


We postpone the proof of Theorem~\ref{theorem: variance and limit} to the next section. 
It uses translation invariance as a symmetry, and the idea can presumably, with some work, be extended to more general lattices with other symmetries.

\begin{proof}[Proof of Theorem~\ref{theorem: all middle Ising}]
  The conclusion of the theorem follows immediately from Theorem~\ref{theorem: variance and limit} by noting that for $\beta>\beta_c$ the measures $\mu_{\beta}^+$ and $\mu_{\beta}^-$ satisfies the assumptions of Theorem~\ref{theorem: variance and limit} and $\bE_{\mu_{\beta}^+}(X(0)) \neq \bE_{\mu_{\beta}^-}(X(0)$. 
\end{proof}

\subsection{Proof of Theorem~\ref{theorem: variance and limit}}\label{section: proof of variance}

In the remainder of this section, we give a proof of Theorem~\ref{theorem: variance and limit}. The proof of this theorem builds on the following technical lemma. 
To state it precisely, as in \cite{JeffNinaMalin2024}, we let 
\begin{equation*}
  \mathcal{S}_i 
 \coloneqq \bigl\{ \Delta \in \mathcal{P}(S)\smallsetminus \{ \emptyset \} \colon i \in \Delta \bigr\}, \quad i \in S. 
\end{equation*}
Further, for $\Delta\subseteq S$, we let
\begin{equation*}
  \mathcal{S}_\Delta^\cup \coloneqq \mathcal{S}_\Delta^{\cup,S} \coloneqq \bigcup_{i\in \Delta} \mathcal{S}_i 
  \quad \text{and} \quad \mathcal{S}_\Delta^\cap \coloneqq \bigcap_{i\in \Delta} \mathcal{S}_i .
\end{equation*}
This notation will be useful to us because it connects probabilities involving \( X^\nu \) with the measure \( \nu \) in the sense that for any set \( \Delta \subseteq S, \) one has
\begin{equation}\label{eq: nu P connection}
  P\bigl( X^\nu(\Delta) \equiv 0 \bigr) = e^{-\nu(\mathcal{S}_\Delta^\cup)}.
\end{equation}

\begin{lemma}\label{lemma: mean is one}
  In the setting of Theorem~\ref{theorem: variance and limit}, we have
  \begin{equation}\label{eq: goal 2}
    \lim_{n \to \infty} |\Lambda_n|^{-2}\sum_{i,j \in \Lambda_n } e^{\nu(\mathcal{S}_{\{ i,j\}}^{\cap})} =1.
  \end{equation}
\end{lemma}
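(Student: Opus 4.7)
The plan is to rewrite the sum as a normalized second moment, so that the lemma becomes equivalent to $\Var(\bar X_n) \to 0$, and then to derive this variance convergence from the assumption via an ergodic-theoretic argument that exploits the Poisson structure.

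First I would use~\eqref{eq: nu P connection} for $\Delta = \{i\}$ and $\Delta = \{i,j\}$, inclusion-exclusion $\nu(\mathcal{S}_{\{i,j\}}^\cup) = \nu(\mathcal{S}_i) + \nu(\mathcal{S}_j) - \nu(\mathcal{S}_{\{i,j\}}^\cap)$, and translation invariance (which gives $\nu(\mathcal{S}_i) = -\log(1-p)$, where $p := \bE[X(0)]$) to derive
\[
e^{\nu(\mathcal{S}_{\{i,j\}}^\cap)} = \frac{\bP(X(i) = X(j) = 0)}{(1-p)^2}.
\]
Summing over $i,j \in \Lambda_n$ and expanding $\bE[(1-\bar X_n)^2]$ yields
\[
|\Lambda_n|^{-2} \sum_{i,j \in \Lambda_n} e^{\nu(\mathcal{S}_{\{i,j\}}^\cap)} = \frac{\bE[(1-\bar X_n)^2]}{(1-p)^2} = 1 + \frac{\Var(\bar X_n)}{(1-p)^2}.
\]
Since $\bE[\bar X_n] = p$ and $\bar X_n \in [0,1]$, passing to limits in \eqref{eq: limit assumption} gives $p \leq c < 1$, so the denominator is bounded away from $0$ and the lemma becomes equivalent to $\lim_n \Var(\bar X_n) = 0$.

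Second, by the $L^2$ mean ergodic theorem applied to the translation-invariant process $X$, one has $\bar X_n \to X_\infty := \bE[X(0) \mid \cI]$ in $L^2$, where $\cI$ denotes the shift-invariant $\sigma$-algebra; hence $\Var(\bar X_n) \to \Var(X_\infty)$. Combining convergence in probability $\bar X_n \to X_\infty$ with~\eqref{eq: limit assumption} forces $X_\infty \leq c < 1$ almost surely, so the task reduces to showing that $X_\infty$ is almost surely constant (equivalently, that $X$ is ergodic).

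This last step is the main obstacle and is where Poisson representability enters essentially. I would decompose $\nu = \nu^{(<\infty)} + \nu^{(=\infty)}$: by~\cite[Theorem 7.7]{JeffNinaMalin2024} the process $X^{(<\infty)}$ is a Bernoulli shift and hence ergodic, contributing only a deterministic constant to the ergodic limit. For the infinite-set part, a Palm-theoretic analysis in the spirit of~\cite[Theorem 7.3]{JeffNinaMalin2024} would show that if $\nu^{(=\infty)}$ places positive mass on a shift-invariant family of sets of strictly positive density $\delta > 0$, then, conditionally on the Poisson number $N$ of such atoms, the density of their union equals $1 - (1-\delta)^N$ almost surely (via the ergodic theorem applied to iid samples from the associated probability measure). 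Since $N$ takes arbitrarily large values with positive probability, this forces $\bP(X_\infty > c) > 0$, contradicting $X_\infty \leq c$. Hence $\nu^{(=\infty)}$ must be concentrated on sets of density zero; the $\bZ^d$-analogue of~\cite[Theorem 7.3]{JeffNinaMalin2024} then yields ergodicity of $X$, so $X_\infty = p$ a.s.\ and $\Var(\bar X_n) \to 0$, as required.
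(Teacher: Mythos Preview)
Your first step is correct and in fact reproduces the computation the paper carries out in the \emph{opposite} direction: the paper deduces Theorem~\ref{theorem: variance and limit} from Lemma~\ref{lemma: mean is one} via exactly the identity
\[
|\Lambda_n|^{-2}\sum_{i,j\in\Lambda_n}e^{\nu(\mathcal S_{\{i,j\}}^\cap)}=1+\frac{\Var(\bar X_n)}{(1-p)^2},
\]
so you have correctly established that the lemma is equivalent to $\Var(\bar X_n)\to 0$. From this point on, however, you are no longer proving Lemma~\ref{lemma: mean is one} as an input to Theorem~\ref{theorem: variance and limit}; you are attempting to prove Theorem~\ref{theorem: variance and limit} directly and then read the lemma off. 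That is legitimate in principle, but your route to $\Var(\bar X_n)\to 0$ has real gaps.

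The ergodic reduction to ``$X_\infty$ is a.s.\ constant'' is fine, but your argument that $X$ is ergodic does not go through as written. First, the claimed almost-sure formula $1-(1-\delta)^N$ for the density of a union of $N$ i.i.d.\ sets of density $\ge\delta$ is false: take $\nu$ translation invariant and supported on the orbit of a single periodic set (e.g.\ $2\bZ$ and its translate in $d=1$); two samples then give a union of density $1/2$ with positive probability, not $3/4$. Only the \emph{expected} density equals $1-(1-q)^N$, and turning that into the statement you need requires a separate argument. Second, and more seriously, you invoke a $\bZ^d$-analogue of~\cite[Theorem~7.3]{JeffNinaMalin2024} (zero-density infinite atoms $\Rightarrow$ ergodicity) that is not established anywhere; the cited result is for $\bZ$ only, and you give no proof of the extension. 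Finally, even granting ergodicity of $X^{(<\infty)}$ and of $X^{(=\infty)}$ separately, you would still need to argue that their independent superposition is ergodic, which is an extra (if standard) ergodic-theoretic step you have not addressed.

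By contrast, the paper avoids ergodic theory altogether. It proves an auxiliary estimate (Lemma~\ref{lemma: variance and limit}): for every $\delta\in(0,1)$, $\nu(\mathcal S^{n,\delta})\to 0$, where $\mathcal S^{n,\delta}$ is the collection of sets covering at least a $\delta$-fraction of $\Lambda_n$. This is obtained by a direct first-moment argument using only translation invariance of $\nu$ and the hypothesis~\eqref{eq: limit assumption}. From this, a counting argument shows that for each $r>0$ the proportion of pairs $(i,j)$ in $\Lambda_n^2$ with $\nu(\mathcal S_{\{i,j\}}^\cap)>r$ is $o(1)$, whence the normalized sum lies in $[1,e^r]$ asymptotically; letting $r\downarrow 0$ gives the lemma. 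This is both shorter and self-contained, and in fact Lemma~\ref{lemma: variance and limit} is morally the quantitative, finite-volume version of the ``no positive-density infinite atoms'' statement you were reaching for.
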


\begin{proof}[Proof of Theorem~\ref{theorem: variance and limit}]
  Note first that
  \begin{align*}
    \mathbb{E}[\bar X_n ] = \mathbb{E}\bigl[X(0)\bigr] = 1-e^{-\nu(\mathcal{S}_0)} \leq 1.
  \end{align*} 
  Further, note that
  \begin{align*}
    &\left|\Lambda_n \right|^{2} \mathbb{E}[\bar X_n^2 ] = \mathbb{E}\Bigl[\bigl(\sum_{i \in \Lambda_n} X(i)\bigr)^2\Bigr] 
    = 
    \left|\Lambda_n \right| \mathbb{E}[X(0)^2] + \sum_{i,j \in \Lambda_n \colon i \neq j} \mathbb{E}\bigl[X(i) X(j)\bigr].
  \end{align*}
  Here
  \begin{align*}
    \mathbb{E}[X(0)^2]=\mathbb{E}[X(0)] \leq 1,
  \end{align*}
  and
  \begin{align*}
    &\sum_{i,j \in \Lambda_n \colon i \neq j} \mathbb{E}\bigl[X(i) X(j)\bigr] 
    =
    \sum_{i,j \in \Lambda_n \colon i \neq j} \bigl( 1 -2e^{-\nu(\mathcal{S}_1) }+e^{-2\nu(\mathcal{S}_1)+\nu(\mathcal{S}_{\{ i,j\}}^{\cap})} \bigr)
    \\&\qquad= 
    \left|\Lambda_n \right| \bigl(\left|\Lambda_n \right|-1\bigr) \bigl( 1 -2e^{-\nu(\mathcal{S}_0) } \bigr) + e^{-2\nu(\mathcal{S}_0)} \sum_{i,j \in \Lambda_n \colon i \neq j} e^{ \nu(\mathcal{S}_{\{ i,j\}}^{\cap})} .
  \end{align*}
  Combining the previous equation, we get 
    \begin{align*}
         &\Var(\bar X_n^2)
         =  \left|\Lambda_n \right|^{-1}  e^{-\nu(\mathcal{S}_0)}
         + e^{-2\nu(\mathcal{S}_0)} \biggl( |\Lambda_n|^{-2}\sum_{i,j \in \Lambda_n \colon i \neq j} e^{ \nu(\mathcal{S}_{\{ i,j\}}^{\cap})} 
         -
         1  \biggr).
    \end{align*} 
  Using~Lemma~\ref{lemma: mean is one}, the desired conclusion immediately follows.
\end{proof}

We now continue with the proof of Lemma~\ref{lemma: mean is one}, which relies on the following statement. 
For this, with \( \delta \in (0,1) \) and \( n \geq 1,\) let 
  \begin{equation}\label{eq:S_n_delta}
  \mathcal{S}^{n,\delta} \coloneqq \bigr\{ \Delta \subseteq S \colon |\Delta \cap \Lambda_n| \geq \delta |\Lambda_n|\bigr\}.
  \end{equation}

\begin{lemma}\label{lemma: variance and limit}
  In the setting of Theorem~\ref{theorem: variance and limit}, it holds that \( {\lim_{n \to \infty}\nu(\mathcal{S}^{n,\delta}) =0.}\)  
\end{lemma}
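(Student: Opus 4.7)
My plan is to argue by contradiction: assume that for some fixed $\delta \in (0,1)$ one has $\nu(\mathcal{S}^{n_k, \delta}) \geq \epsilon > 0$ along a subsequence $n_k \to \infty$. The starting observation is that a single Poisson point $B$ with $|B \cap \Lambda_n| \geq \delta |\Lambda_n|$ already forces $\bar X_n \geq \delta$; since the number of Poisson points falling in $\mathcal{S}^{n,\delta}$ is Poisson with mean $\nu(\mathcal{S}^{n,\delta})$, we get
\[
\bP\bigl(\bar X_n \geq \delta\bigr) \;\geq\; 1 - e^{-\nu(\mathcal{S}^{n,\delta})} \;\geq\; 1-e^{-\epsilon}.
\]
When $\delta \geq c$ this immediately contradicts $\bP(\bar X_n \geq c) \to 0$, so the substantive case is $\delta < c$, where one must amplify the density produced by this single block.

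The amplification rests on the following remark: although $\nu$ itself need not be translation invariant, its cylinder projections $\mu_F(A) \coloneqq \nu(\{\Delta : \Delta \cap F = A\})$ are uniquely determined by the finite-dimensional marginals of $X$ -- via inclusion--exclusion applied to~\eqref{eq: nu P connection} -- and are therefore translation covariant. Consequently
\[
\nu\bigl(\{\Delta : |\Delta \cap (v+\Lambda_n)| \geq \delta|\Lambda_n|\}\bigr) \;=\; \nu(\mathcal{S}^{n,\delta})
\]
for every $v \in \bZ^d$, so the single-point bound above holds uniformly on any translate of $\Lambda_{n_k}$.

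Exploiting this, for $N \gg n_k$ I would tile $\Lambda_N$ with $K \approx |\Lambda_N|/|\Lambda_{n_k}|$ disjoint translates $W_l = v_l + \Lambda_{n_k}$ and let $E_l$ be the increasing event $\{|W_l|^{-1}\sum_{i \in W_l} X(i) \geq \delta\}$. The invariance and single-point bound give $\bP(E_l) \geq 1-e^{-\epsilon}$ for every $l$, so
\[
\bE\Bigl[\sum_{l=1}^K \ind_{E_l}\Bigr] \;\geq\; K(1-e^{-\epsilon}).
\]
Applying reverse Markov to the bounded quantity $K^{-1}\sum_l \ind_{E_l} \in [0,1]$ then furnishes constants $\alpha, \beta > 0$ depending only on $\epsilon$ for which $\bP(\bar X_N \geq \alpha \delta) \geq \beta$ for all large $N$.

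The main obstacle is that $\alpha\delta$ may still be strictly less than $c$, so the argument as just described does not yet contradict the hypothesis. I expect the resolution to be an iterative bootstrap: the positive-probability lower bound $\bar X_N \geq \alpha \delta$ at scale $N$ should, via the Poisson coupling of Section~\ref{section: properties} (restricting to an appropriate $\Gamma \subseteq \mathcal{P}(S)$) and the same tiling argument applied at a still larger scale, upgrade to a strictly higher threshold $\alpha'\delta > \alpha\delta$. After finitely many iterations the threshold exceeds $c$, producing the desired contradiction. Making this amplification quantitative -- so that each iteration strictly improves the threshold and the process eventually escapes every sub-$c$ level -- is the most delicate step and where I expect the bulk of the technical work to lie.
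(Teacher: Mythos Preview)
Your opening observations are sound: a single Poisson point in $\mathcal{S}^{n,\delta}$ does force $\bar X_n \geq \delta$, and the translation covariance of the finite cylinder masses of $\nu$ follows from the fact that the quantities $\nu(\mathcal{S}_A^\cap)$ are determined by the law of $X$ via~\eqref{eq: nu P connection} and inclusion--exclusion. The easy case $\delta \geq c$ is correctly dispatched.

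The genuine gap is the amplification step. Your tiling argument cannot raise the density threshold above $\delta$: if each sub-box $W_l$ has empirical density at least $\delta$ with probability $p$, then on the event that a fraction $\alpha \leq 1$ of the sub-boxes are good you obtain only $\bar X_N \gtrsim \alpha\delta \leq \delta$. Reverse Markov trades threshold against probability, but the product never exceeds $\delta$; iterating therefore \emph{decreases} the threshold (each pass multiplies it by some $\alpha<1$) rather than increasing it. The proposed bootstrap runs in the wrong direction, and the appeal to ``restricting to an appropriate $\Gamma$'' does not supply a concrete mechanism to reverse this.

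The paper amplifies differently: instead of tiling space, it exploits \emph{several independent Poisson points at the same scale}. It enlarges $\mathcal{S}^{n,\delta}$ to the set $T^{n,\delta}$ of $\Delta$ having density $\geq \delta$ in \emph{some} translate $\tau_x(\Lambda_n)$ with $x \in \Lambda_{2n}$. The crucial step is a translation argument (the $\hat\Delta$ construction) showing that if $\Delta$ is sampled from the normalisation of $\nu|_{T^{n,\delta}}$, then every fixed site $i \in \Lambda_n$ satisfies $\bP(i \in \Delta) \geq \delta/3^d$. Conditioning on the event $\mathcal{E}_{n,k}$ that at least $k$ Poisson points fall in $T^{n,\delta}$, independence of these points then gives $\bP\bigl(X(i)=0 \mid \mathcal{E}_{n,k}\bigr) \leq (1-\delta/3^d)^k$ for each $i \in \Lambda_n$. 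Choosing $k$ so that this is below $(1-c)/2$, Markov's inequality yields $\bP(\bar X_n \geq c \mid \mathcal{E}_{n,k}) \geq 1/2$, forcing $\bP(\mathcal{E}_{n,k}) \to 0$ and hence $\nu(T^{n,\delta}) \to 0$. The amplification from $\delta$ up past $c$ comes from the independence of the $k$ Poisson points, not from any spatial averaging; this is the idea your sketch is missing.
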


\begin{proof}[Proof of Lemma~\ref{lemma: mean is one}]
    Let \( \delta \in (0,1), \) and \( n \geq 1\). Then any set \( \Delta \subseteq \mathbb{Z}^d \) such that \( \Delta \notin \mathcal{S}^{n,\delta}\) satisfies \( |\Delta \cap \Lambda_n| \leq \delta |\Lambda_n|.\) From this it follows that
    \begin{equation}\label{eq: step 1 to goal 2}
        \nu(\mathcal{S}_i) \geq \nu( \mathcal{S}_i \smallsetminus \mathcal{S}^{n,\delta})
        \geq 
        \sum_{j \in \Lambda_n\smallsetminus \{ i \}} \nu( \mathcal{S}_{\{ i,j \}}^\cap\smallsetminus \mathcal{S}^{n,\delta})/(\delta |\Lambda_n|) .
      \end{equation} 
  For \( r > 0 \) and \( i \in \Lambda_n,\) let 
  \[
    K_{n,r}(i) \coloneqq \bigl\{ j \in \Lambda_n \smallsetminus \{ i \} \colon \nu(\mathcal{S}_{\{i,j\}}^\cap)>r \bigr\}.
  \]  
  Using~\eqref{eq: step 1 to goal 2}, it then follows that
  \begin{align*}
    &\nu(\mathcal{S}_i) 
    \geq 
    \sum_{j \in K_{n,r}(i)} \nu( \mathcal{S}_{\{ i,j \}}^\cap\smallsetminus \mathcal{S}^{n,\delta})/(\delta |\Lambda_n|)
    \\&\qquad= 
    \sum_{j \in K_{n,r}(i)} \nu( \mathcal{S}_{\{ i,j \}}^\cap )/(\delta |\Lambda_n|)
    -\sum_{j \in K_{n,r}(i)} \nu( \mathcal{S}_{\{ i,j \}}^\cap\cap \mathcal{S}^{n,\delta})/(\delta |\Lambda_n|)
    \\&\qquad> 
    |K_{n,r}(i)| r/(\delta |\Lambda_n|)
    -|K_{n,r}(i)|\nu( \mathcal{S}^{n,\delta})/(\delta |\Lambda_n|)
    =
    \frac{|K_{n,r}(i)|}{\delta |\Lambda_n|} \bigl( r 
    - \nu( \mathcal{S}^{n,\delta}) \bigr) 
  \end{align*} 
  and hence
  \begin{equation*}
    |K_{n,r}(i)|/|\Lambda_n| < \frac{\delta \nu(\mathcal{S}_i)}{r-\nu(\mathcal{S}^{n,\delta})}
  \end{equation*}
  whenever \( r > \nu(\mathcal{S}^{n,\delta}).\) 
  Since \( K_{n,r}(i)\) does not depend on \( \delta,\) it follows from Lemma~\ref{lemma: variance and limit} that for any \( r>0,\) we have
  \begin{align*}
    \lim_{n \to \infty}|K_{n,r}(i)|/|\Lambda_n| = 0.
  \end{align*} 
  Now note that
  \begin{align*}
    &\sum_{i,j \in \Lambda_n \colon i \neq j} e^{\nu(\mathcal{S}_{\{ i,j\}}^{\cap})} 
    =
    \sum_{i \in \Lambda_n} \sum_{j \in \Lambda_n \smallsetminus \{i\}} e^{\nu(\mathcal{S}_{\{ i,j\}}^{\cap})} 
    \\&\qquad=
    \sum_{i \in \Lambda_n} \sum_{j \in K_{n,r}(i)}e^{\nu(\mathcal{S}_{\{ i,j\}}^{\cap})} 
    +
    \sum_{j \in \Lambda_n \smallsetminus (K_{n,r}(i) \cup \{ i \})}
    e^{\nu(\mathcal{S}_{\{ i,j\}}^{\cap})}.
  \end{align*}
  Since \( 0 \leq \nu(\mathcal{S}_{\{i,j\}}^\cap) \leq \nu(\mathcal{S}_i)=\nu(\mathcal{S}_0)<\infty\) for all \( i,j \in \Lambda_n, \)
  it follows that for any \( r >0,\) 
  
  \begin{align*}
    &1 \leq \lim_{n \to \infty} n^{-2d}\sum_{i,j \in \Lambda_n \colon i \neq j} e^{\nu(\mathcal{S}_{\{ i,j\}}^{\cap})} 
    =
    \lim_{n \to \infty} n^{-2d}\sum_{i \in \Lambda_n} \sum_{j \in \Lambda_n \smallsetminus ( K_{n,r}(i) \cup \{ i \})}
    e^{\nu(\mathcal{S}_{\{ i,j\}}^{\cap})} 
    \leq e^{r} .
  \end{align*} 
  Since \( r>0\) was arbitrary, we obtain~\eqref{eq: goal 2}.
\end{proof}

Finally, we present the proof of Lemma~\ref{lemma: variance and limit}.

\begin{proof}[Proof of Lemma~\ref{lemma: variance and limit}] 
  
  Fix \( n \geq 1 .\) 
  For $x \in \bZ^d$ let $\tau_x \colon \bZ^d \to \bZ^d$ be the shift of $\bZ^d$ by $x$, i.e.\ the map which for each \( y \in \bZ^d\) maps \( y \) to $\tau_x(y) = x+y$. 
  %
  Note that with this notation, we have 
    $ x \in \Lambda_{2n} \Leftrightarrow \tau_x (\Lambda_n) \cap \Lambda_n \neq \emptyset.$  
  Further, for $\delta \in (0,1)$, let
  \begin{equation}
    T^{n,\delta} \coloneqq \bigr\{ \Delta \subseteq \bZ^d \colon 
    \exists x \in \Lambda_{2n} \text{ such that } | \tau_x( \Lambda_n) \cap \Delta| \geq  \delta |\Lambda_n|\bigr\}.
  \end{equation} 
  In other words, a set \( \Delta \subseteq \bZ^d \) is in \( T^{n,\delta} \) if it has density at least \( \delta\) in some box \( \Lambda\) with side length \( 2n\) which intersects \( \Lambda_n.\)  

    For \( j \geq 1,\) let \( \mathcal{E}_{n,j}\) be the event that \( Y_{n,\delta} \geq j\) where \( Y_{n,\delta} \sim \mathrm{Poisson}(\nu|_{{T}^{n,\delta}}). \)

    We first show that  \( \limsup_{n\to \infty} \nu(T^{n,\delta}) < \infty.\) To this end, note first that for any \( i \in \Lambda_{3n},\) by translation invariance, we have
    \begin{equation}
        P(X_n(i) = 1 \mid \mathcal{E}_{n,j}) \geq  1 - \bigl( 1-\frac{\delta}{3^d} \Bigr)^j.
    \end{equation}
    If \( \limsup_{n\to \infty} \nu(T^{n,\delta}) = \infty,\) then \( \limsup_{n \to \infty} P(\mathcal{E}_{n,j}) = 1\) for any \( j \geq 1.\) By translation invariance, this implies that \( \limsup_{n \to \infty} P(\bar X_{3n} \geq c) = 1.\) Since this contradicts~\eqref{eq: limit assumption}, it follows that \( {\limsup_{n\to \infty} \nu(T^{n,\delta}) < \infty}.\)

  Now, let \( c \in (0,1) \) be such that~\eqref{eq: limit assumption} holds, and choose $k \geq 1$ such that 
  \begin{equation}\label{eq k}
  \Bigl(1-\frac{\delta}{3^d}\Bigr)^k < \frac{1-c}{2}.
  \end{equation}
  Then 
  \[
    \bP(\mathcal{E}_{n,k})= \sum_{\ell \ge k}^\infty
      \frac{e^{-\nu(\mathcal{T}^{n,\delta})} \nu(\mathcal{T}^{n,\delta})^\ell}{\ell!}.
  \]  
  We claim that for each $i \in \Lambda_n$ and any $k$ it holds that 
  \begin{equation}\label{eq: key bound for this lemma}
    \bP\bigl(X_n(i)= 0\mid \mathcal{E}_{n,k} \bigr)\leq  \Bigl(1-\frac{\delta}{3^d}\Bigr)^k  
  \end{equation}
  Before presenting the proof of this claim, we show how it implies the statement of the lemma. Particularly, applying Markov's inequality conditioned on $\mathcal{E}_{n,k}$, we get
  \begin{align*}
    &\bP\bigl(\bar X_{n} \ge c\mid \mathcal{E}_{n,k}\bigr) 
    =
    1-\bP\Bigl(\sum_{i\in \Lambda_{n}} \bigl(1-X(i)\bigr)  > (1-c)|\Lambda_n|\mid \mathcal{E}_{n,k}\Bigr)
    \\&\qquad\geq 1-\frac{\mathbb{E}\bigl[ \sum_{i\in \Lambda_{n}} \bigl(1-X(i)\bigr)\mid \mathcal{E}_{n,k} \bigr] } {(1-c) |\Lambda_n|}
    \ge 1-\frac{(1-c)|\Lambda_n|/2}{(1-c) |\Lambda_n|}=\frac{1}{2}.
  \end{align*}
  From this, it follows that
  \begin{equation*}    
    \bP(\bar X_{n} \ge c)\ge \frac{\bP(\mathcal{E}_{n,k})}{2}.
  \end{equation*}
  Using~\eqref{eq: limit assumption}, we therefore obtain \( \lim_{n \to \infty} \bP(\mathcal{E}_{n,k}) = 0 \). 
  Noting that 
  \[
  \bP(\mathcal{E}_{n,k})= \sum_{\ell \ge k}^\infty
      \frac{e^{-\nu(\mathcal{T}^{n,\delta})} \nu(\mathcal{T}^{n,\delta})^\ell}{\ell!} \geq \frac{e^{-\nu(\mathcal{T}^{n,\delta})} \nu(\mathcal{T}^{n,\delta})^k}{k!}
    \]
    and recalling that \( {\limsup_{n\to \infty} \nu(T^{n,\delta}) < \infty},\)
    it follows  that $ \lim_{n \to \infty} \nu(\mathcal{T}^{n,\delta})=0$. From this we conclude the proof since \( \mathcal{S}^{n,\delta} \subseteq \mathcal{T}^{n,\delta}. \)  
    
  It remains to show that \eqref{eq: key bound for this lemma} holds. For this,   
 for \( \Delta \in T^{n,\delta}, \) let 
  \begin{equation*} 
    \hat \Delta \coloneqq \bigr\{ i \in \Delta \colon 
    \exists x \in \Lambda_{2n} \text{ such that }  |\tau_x( \Lambda_n) \cap \Delta| \geq  \delta |\Lambda_n| \text{ and } i \in \tau_x( \Lambda_n) \bigr\}. 
  \end{equation*}
  Note that, by construction, it holds that $ \hat \Delta \subset \Delta$. Further, since \( \Delta \in T^{n,\delta},\) we know that there is \(x \in \Lambda_{2n}\) such that \(\tau_x( \Lambda_n) \cap \Lambda_n \neq \emptyset \) and \( |\tau_x( \Lambda_n) \cap \Delta| \geq  \delta |\Lambda_n| \). This implies that \( |\hat \Delta| \geq \delta |\Lambda_n|\).  
  
  Now, for any
  \( \Delta \in T^{n,\delta}, \) \( i \in \Lambda_{n},\) and \( j \in \hat \Delta ,\) we have \( \tau_{i-j} \Delta \in T^{n,\delta}\) and \( i \in \widehat{\tau_{i-j} \Delta}.\) Moreover, as we argue below, it holds that  
  \begin{equation}\label{eq: translation implication}
    i \in \widehat {\tau_{i-j} \Delta} \subseteq \tau_{i-j} \Delta \in T^{n,\delta}.
  \end{equation} 
  Since \( \nu \) is translation invariant, it follows from~\eqref{eq: translation implication} that, if we let \( \Delta \sim \nu|_{T^{n,\delta}}\), then 
  \begin{equation}\label{eq: eq and ineq} 
    \bP (i \in \hat \Delta) \geq \bP (j \in \hat \Delta).
  \end{equation} 
  From this, it follows that 
  \begin{align*}
    |\Lambda_{3n}| \bP(i \in \hat \Delta) = \sum_{j \in \Lambda_{3n}} \bP(i \in \hat \Delta) 
    \geq 
    \sum_{j \in \Lambda_{3n}} \bP(j \in \hat \Delta) = \mathbb{E}\bigl[|\hat \Delta \cap \Lambda_{3n}|\bigr].
  \end{align*}  
  Hence 
  \[
    \bP(i \in \Delta) \geq \bP(i \in \hat \Delta) \geq \frac{\mathbb{E}\bigl[|\hat \Delta \cap \Lambda_{3n}|\bigr]}{|\Lambda_{3n}|}.
  \]
  Since \( |\hat \Delta| = |\hat \Delta\cap \Lambda_{3n}| \geq \delta |\Lambda_n|\) by definition, it follows that
  \begin{equation}\label{eq: key bound for this lemma 1}
    \bP(i \in \Delta) \geq \frac{\delta |\Lambda_n|}{|\Lambda_{3n}|} = \frac{\delta}{3^d}.
  \end{equation}
  By this, the independence of the Poisson process and \eqref{eq k}, we conclude \eqref{eq: key bound for this lemma}.

It remains to show that \eqref{eq: translation implication} holds. 
 To this end, let \( \Delta \in T^{n,\delta}, \) \( i \in \Lambda_{n},\) and \( j \in \hat \Delta.\) 
  We first show that \( \tau_{i-j}\Delta \in T^{n,\delta}.\) Indeed, note that since \( j \in \hat \Delta ,\) there is \( x \in \Lambda_{2n}\) such that \(|\tau_x(\Lambda_n)\cap \Delta|\geq \delta|\Lambda_n|\) and \( j \in \tau_x(\Lambda_n). \) Fix one such \( x,\) and let \( y \coloneqq x+i-j.\)  
  Then
  \[
  \tau_y (\tau_x^{-1}j) = \tau_{x+i-j} \bigl( \tau_x^{-1} (j)\bigr) = \tau_{i-j} j.
  \]
  Since \( \tau_{i-j} j = i \in \Lambda_n\) by assumption, it follows that \( \tau_y(\Lambda_n) \cap \Lambda_n \neq \emptyset, \) and thus \( y \in \Lambda_{2n}.\)
  Moreover, we have
  \[ | \tau_y\Lambda_n \cap \tau_{i-j}\Delta| = | \tau_x\tau_{i-j}\Lambda_n \cap \tau_{i-j}\Delta| = | \tau_x \Lambda_n \cap \Delta| \geq \delta|\Lambda_n|.
  \] 
  This shows that \( \tau_{i-j}\Delta \in T^{n,\delta}.\)  
  We now argue that also \( i \in \widehat{\tau_{i-j} \Delta}.\) For this, note that since \( j \in \hat \Delta, \) we have 
  \[ i = \tau_{i-j} j\in \tau_{i-j} \hat \Delta \subseteq \tau_{i-j} \Delta ,\] 
  and since \( j \in \tau_x(\Lambda_n) \), we have
  \begin{equation*}
    i = \tau_{i-j} j \in \tau_{i-j} \tau_x(\Lambda_n) = \tau_y(\Lambda_n),
  \end{equation*}
  This shows that \( i \in \widehat{\tau_{i-j}\Delta },\) and thus completes the proof of~\eqref{eq: translation implication}.
  \end{proof}
 
\section{Mixing properties for the contact process}\label{sec:CP}

In this section, we present the proof of Theorem~\ref{theorem:cp2}. For this, we first recall the basic constructions of the contact process on $\bZ^d$.

As detailed in \cite[Chapter 1]{LiggettSIS1999}, the contact process with parameter $\lambda\in(0,\infty)$ can be specified in terms of it pre-generator $L_{\lambda} \colon \mathcal{C}(\mathbb{R}) \mapsto \mathcal{C}(\mathbb{R})$, where $\mathcal{C}(\mathbb{R})$ denotes the set of bounded and continuous functions $f \colon \Omega \rightarrow \mathbb{R}$ and 
$\Omega \coloneqq \{0,1\}^{\bZ^d}$. For the contact process, this is given by 
\begin{equation}\label{eq contact generator}
  L_{\lambda} f(\omega) \coloneqq \sum_{\substack{x \in \bZ^d \mathrlap{\colon}\\ \omega(x)=1}} \Bigl( \bigl[f(\omega^{x \leftarrow 0})-f(\omega)\bigr]+\lambda \sum_{y \sim x} \bigl[f(\omega^{y \leftarrow 1})-f(\omega) \bigr] \Bigr), \quad \omega \in \Omega.
\end{equation} 
Here, for \( \omega \in \Omega, \) $z \in \bZ^d,$ and $i \in \{0,1\}$, we write $\omega^{z \leftarrow i} \in \Omega$ for the configuration with $\omega^{z \leftarrow i}(z)=i$ and $\omega^{z \leftarrow i}(x)=\omega(x)$ for $x \in \mathbb{Z}^d \smallsetminus \{z\}$.  

The contact process can be constructed using terminology from percolation theory by the graphical construction~\cite[Chapter 1.1]{LiggettSIS1999}.
 We also recall this construction for the reader's convenience, as it will be helpful in the following arguments.

For each $x\in \bZ^d$ and each ordered pair $(x,y)$ of nearest neighbour vertices in \( \mathbb{Z}^d,\) let $(N_x)$ and $(N_{(x,y)})$ be independent Poisson processes on $\bR$ with rate $1$ and rate $\lambda$, respectively. 
An event of $N_x$ represents a potential "healing event" where the state at \( x \) at that time is set to \( 0 \), whereas an event of $N_{(x,y)}$ represents a potential ``infection event'', where the state at \( y \) will be set to \( 1 \) if the state at either \( x\) or \( y \) is \( 1 \).
For each $x,y \in \bZ^d$ and $s\leq t$, we say that $(x,s)$ is connected to $(y,t)$ by an \emph{active} path, written $(x,s) \rightarrow (y,t)$, if and only if there exists a path in $\bZ^d \times \bR$ starting at $(x,s)$ and ending at $(y,t)$ that goes forwards in time without hitting any healing event and that may cross to another vertex at the instance of an infection event in the prescribed direction of the ordered pair. 
That is, there exists a sequence $x=x_0,x_1,\dots,x_n=y$ in $\bZ^d$ and times $s=t_0<t_1<\dots<t_{n+1} = t$ such that, for each $i=0,\dots,n$, there are no healing events at $x_i$ within time $[t_i,t_{i+1}]$, but there is an infection event at $(x_i,x_{i+1})$ within the same time window. 

Now, denote by $(\eta_t)_{t \in [0,\infty)}$ the process on $\Omega$ given by \begin{equation}\label{def:GP_CP}
  \eta_t(x) \coloneqq \ind \bigl( \forall s < t \; \exists y \in \bZ^d \colon (y,s) \rightarrow (x,t) \bigr), \quad x \in \bZ^d,\, t \geq 0. 
\end{equation}
Then $\eta_0$ is distributed according to the upper invariant measure $\mu_{\lambda}$. Moreover, in distribution, $(\eta_t)_{t \in [0,\infty)}$ equals the same process as that defined via \eqref{eq contact generator} with initial distribution given $\mu_{\lambda}$. 
Further, by construction, the process $(\eta_t)$ is time-stationary so that $\eta_t \sim \mu_{\lambda}$ for any $t\geq 0$.  
In the following, we denote by $\bP_{\lambda}$ the distribution of \eqref{def:GP_CP} on the probability space on which the processes of the graphical construction introduced above are defined. Moreover, we write $o \in \bZ^d$ for the origin.  

\subsection{The upper invariant measure of the contact process is not spatially mixing}

In this subsection, we present a proof of Theorem~\ref{theorem:cp2}\ref{item: cp2b}. For this, we first provide an extension of~\cite[Proposition 2.1]{LiggettSteifSD2006} to the contact process on $\bZ^d.$

\begin{proposition}\label{lem:cp1b}
  Let $d\geq 1$ and $\lambda > \lambda_c$, and consider $X \sim \mu_{\lambda}$ on $\{0,1\}^{\bZ^d}.$ Then
  \begin{equation}\label{eq:lem:cp1111}
    \lim_{n \rightarrow \infty} \mu_{\lambda} \bigl( X(o)=0\mid X(\Lambda_n \setminus \{o\}) \equiv 0 \bigr) = 0. 
  \end{equation}
\end{proposition}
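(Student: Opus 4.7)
The plan is to pass to the dual picture and then exploit additivity in the graphical construction. By self-duality of the contact process, for any finite $B \subseteq \bZ^d$ one has $\mu_\lambda(X(B) \equiv 0) = \bP_\lambda(\tau_B < \infty)$, where $\tau_B$ denotes the extinction time of the contact process started with all of $B$ occupied. Applying this to $B = \Lambda_n$ and $B = \Lambda_n \setminus \{o\}$ recasts the conditional probability in~\eqref{eq:lem:cp1111} as the ratio
\[
\frac{\bP_\lambda(\tau_{\Lambda_n} < \infty)}{\bP_\lambda(\tau_{\Lambda_n \setminus \{o\}} < \infty)}.
\]

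Next, the additivity of the graphical construction, $\eta_t^{A \cup B} = \eta_t^A \cup \eta_t^B$, yields $\tau_{\Lambda_n} = \max\bigl(\tau_o,\, \tau_{\Lambda_n \setminus \{o\}}\bigr)$. Consequently the numerator factors as $\bP_\lambda(\tau_o < \infty,\, \tau_{\Lambda_n \setminus \{o\}} < \infty)$, and the ratio simplifies to the conditional extinction probability $\bP_\lambda\bigl(\tau_o < \infty \mid \tau_{\Lambda_n \setminus \{o\}} < \infty\bigr)$. Proving~\eqref{eq:lem:cp1111} thus reduces to showing that this conditional probability vanishes as $n \to \infty$.

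For this reduction the plan is to adapt the argument of~\cite[Proposition~2.1]{LiggettSteifSD2006}, where the corresponding statement is established for the plus state of the Ising model. Since $\lambda > \lambda_c$, the event $\{\tau_{\Lambda_n \setminus \{o\}} < \infty\}$ is highly atypical - its probability is at least polynomially small in $|\Lambda_n|$, and decays exponentially by the sharpness of the supercritical phase transition. Conditioning on it therefore strongly constrains the underlying Poisson processes of the graphical construction in the space-time region relevant for $\Lambda_n$, and the goal is to transfer this constraint into an asymptotic bound on the extinction probability of the single-seed process from $o$, uniformly in $n$.

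The hardest step, I expect, is to make this transfer quantitative: the events $\{\tau_o < \infty\}$ and $\{\tau_{\Lambda_n \setminus \{o\}} < \infty\}$ are intricately coupled through the same graphical representation, so a union bound or a subadditive estimate at the level of the graphical data is not tight enough. Making the argument go through will likely require ingredients specific to the supercritical contact process - large deviation estimates in the spirit of Durrett-Schonmann, the linear cone of growth of a surviving contact process, and a carefully tuned decomposition based on whether and when the graphical cluster of $o$ meets the sites of $\Lambda_n \setminus \{o\}$ - so as to extract a ratio estimate that dominates the exponential smallness of the normalization $\bP_\lambda(\tau_{\Lambda_n \setminus \{o\}} < \infty)$.
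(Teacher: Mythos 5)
Your first two reduction steps are correct and clean: by self-duality and the additivity $\eta_t^{A\cup B}=\eta_t^A\cup\eta_t^B$ one indeed gets $\mu_\lambda\bigl(X(o)=0\mid X(\Lambda_n\setminus\{o\})\equiv 0\bigr)=\bP_\lambda(\tau_{\Lambda_n}<\infty)/\bP_\lambda(\tau_{\Lambda_n\setminus\{o\}}<\infty)=\bP_\lambda\bigl(\tau_o<\infty\mid \tau_{\Lambda_n\setminus\{o\}}<\infty\bigr)$. But the quantity you then set out to send to $0$ cannot vanish: $\{\tau_o<\infty\}$ and $\{\tau_{\Lambda_n\setminus\{o\}}<\infty\}$ are both decreasing events of the graphical construction, so by positive association (equivalently, by applying d-FKG directly to the left-hand side of~\eqref{eq:lem:cp1111}) one has $\bP_\lambda\bigl(\tau_o<\infty\mid \tau_{\Lambda_n\setminus\{o\}}<\infty\bigr)\geq \bP_\lambda(\tau_o<\infty)=\mu_\lambda\bigl(X(o)=0\bigr)>0$ for all $n$. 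The display~\eqref{eq:lem:cp1111} contains a typo: the event should read $X(o)=1$ (equivalently, the limit of the probability as written is $1$, not $0$), as is clear from the final display of the paper's own proof and from the use of the proposition as the base case $k=1$ of the induction in the proof of Theorem~\ref{theorem:cp2}\ref{item: cp2b}. In your dual formulation the correct target is therefore $\bP_\lambda\bigl(\tau_o=\infty\mid \tau_{\Lambda_n\setminus\{o\}}<\infty\bigr)\to 0$; you are currently aiming at a statement that is false.

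Even with the target corrected, your proposal stops exactly where the work begins: the "hardest step" is described only as a plan invoking Durrett--Schonmann large deviations, the growth cone, and an unspecified decomposition, and no mechanism is given for controlling survival from $o$ under the heavy conditioning on $\{\tau_{\Lambda_n\setminus\{o\}}<\infty\}$ (a union bound indeed cannot work, and it is not explained what replaces it). The paper's proof is entirely different and much softer: it takes $g=\ind\{X(\Lambda_m)\equiv 0\}$, uses stationarity in the form $\int L_\lambda g\,d\mu_\lambda=0$ to obtain the exact identity equating $\sum_{x\in\Lambda_m}\mu_\lambda\bigl(X(x)=1,\,X(\Lambda_m\setminus\{x\})\equiv 0\bigr)$ with $\lambda$ times a sum over boundary edges, divides by $\mu_\lambda\bigl(X(\Lambda_m)\equiv 0\bigr)$, and finishes with two applications of d-FKG and the fact that $|\partial_e\Lambda_m|/|\Lambda_m|\to 0$. (Note also that \cite[Proposition 2.1]{LiggettSteifSD2006}, which the paper extends, concerns the one-dimensional contact process, not the Ising model as you state.) Your duality route may well be completable, but as written it contains both a false intermediate goal and an unexecuted main step.
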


The proof of Proposition~\ref{lem:cp1b} is a direct extension of that of~\cite[Proposition 2.1]{LiggettSteifSD2006} from $d=1$ to general dimensions. It uses the description of the contact process by its pre-generator in \eqref{eq contact generator} and the particular property, since $\mu_{\lambda}$ is stationary, that $\int L_{\lambda} g \, d\mu_{\lambda}=0$ for any cylinder function $g$; see, e.g.,~\cite[Theorem B7]{LiggettSIS1999}.

\begin{proof}[Proof of Proposition~\ref{lem:cp1b}]
  Let $m \in \bN$, and let $g \colon \Omega \rightarrow \{0,1\}$ be the cylinder function given by 
  \[ 
    g(\omega) \coloneqq \ind \bigl( \sum_{i \in \Lambda_m} \omega(i) =0 \bigr). 
  \] 
  Further, let $\partial_e \Lambda_m$ denote the set of all ordered pairs $(x,y)$ such that $x \in \Lambda_m$ and $y \notin \Lambda_m$ with $x\sim y$. 
  Then, 
  \begin{align*}
    &\int L_\lambda g \, d\mu_\lambda 
    = 
    \int \sum_{\substack{x \in \bZ^d \mathrlap{\colon}\\ \omega(x)=1}} \Bigl( \bigl[g(\omega^{x \leftarrow 0})-g(\omega)\bigr]+\lambda \sum_{y \sim x} \bigl[g(\omega^{y \leftarrow 1})-g(\omega) \bigr] \Bigr) \, d\mu_\lambda
    \\&\qquad=
    \int \sum_{\substack{x \in \Lambda_m \mathrlap{\colon}\\ \omega(x)=1}} \Bigl( \bigl[g(\omega^{x \leftarrow 0}) \bigr]-\lambda \sum_{\substack{x \in \bZ^d \smallsetminus \Lambda_m \mathrlap{\colon}\\ \omega(x)=1}} \sum_{\substack{ y \in \Lambda_m \mathrlap{\colon}\\ y \sim x} } g(\omega) \Bigr) \, d\mu_\lambda
    \\&\qquad=
    \sum_{x \in \Lambda_m} \mu_{\lambda} \bigl( X(x)=1,\, X(\Lambda_m \setminus \{x\}) \equiv 0 \bigr)
    -\lambda \sum_{(x,y) \in \partial_e \Lambda_m} \mu_{\lambda} \bigl(X(y)=1,\, X(\Lambda_m) \equiv 0 \bigr).
  \end{align*} 
  Since $\int L_{\lambda} g \, d\mu_{\lambda}=0$, it follows that
  \begin{align}
    &\sum_{x \in \Lambda_m} \mu_{\lambda} \bigl( X(x)=1,\, X(\Lambda_m \setminus \{x\}) \equiv 0 \bigr)
    =\lambda \sum_{(x,y) \in \partial_e \Lambda_m} \mu_{\lambda} \bigl(X(y)=1,\, X(\Lambda_m ) \equiv 0 \bigr).
  \end{align}
  Dividing by $\mu_{\lambda} \bigl( X(\Lambda_m) \equiv 0 \bigr)$ on both sides, we see that 
  \begin{align}
    \sum_{x \in \Lambda_m} \frac{\mu_{\lambda} \bigl( X(x)=1 \mid X(\Lambda_m \setminus \{x\}) \equiv 0 \bigr)}{\mu_{\lambda} \bigl( X(x)=0 \mid X(\Lambda_m \setminus \{x\}) \equiv 0 \bigr)} 
     \leq \lambda \left|\partial_e \Lambda_m\right|.  
  \end{align} 
  Now note that by the d-FKG property, we have 
  \begin{align}
    \mu_{\lambda} \bigl( X(x)=0 \mid X(\Lambda_m \setminus \{x\}) \equiv 0 \bigr) \geq \mu_{\lambda} \bigl( X(x)=0 \bigr).
  \end{align}
  Also by the d-FKG property, for any $x \in \Lambda_m$ and any box \( \Lambda \supseteq \Lambda_m , \) we have that%
  \begin{align} 
    &\mu_{\lambda} \pigl( X(x)=1\mid X\bigl(\Lambda \setminus \{x\} \bigr) \equiv 0 \pigr) \leq \mu_{\lambda} \bigl( X(x)=1 | X(\Lambda_m \setminus \{x\}) \equiv 0 \bigr).
  \end{align}  
  Therefore, by translation invariance of the model, it follows that
  \begin{equation*}
    \left| \Lambda_m \right| \mu_{\lambda} \pigl( X(o)=1\mid X\bigl(\Lambda_{3m} \setminus \{o\} \bigr) \equiv 0 \pigr) \leq  \lambda \left| \partial_e \Lambda_m \right|\mu_{\lambda} \bigl( X(o)=0\bigr)
  \end{equation*}  
  Since \( \lim_{m \rightarrow \infty} { \left|\partial_e \Lambda_m \right|}/{ \left|\Lambda_m \right|}=0, \) letting $m\rightarrow \infty,$ we obtain \eqref{eq:lem:cp1111}. 
\end{proof}

We next show how to leverage Proposition~\ref{lem:cp1b} in order to prove Theorem~\ref{theorem:cp2}\ref{item: cp2b}.

\begin{proof}[Proof of Theorem~\ref{theorem:cp2}\ref{item: cp2b}]
  We prove Theorem~\ref{theorem:cp2}\ref{item: cp2b} via an inductive argument. Our induction hypothesis is that, for some \( k \geq 1\) and any set $\Delta_k\subseteq \bZ^d$ of cardinality $k,$ it holds that
  \begin{equation}
    \mu_{\lambda} \bigl( X(\Delta_k) \equiv 0 \mid X(\Delta_k^c) \equiv 0 \bigr) = 1.
  \end{equation}
  By Proposition~\ref{lem:cp1b}, we know this holds for $k=1$.

  Now, assume the induction hypothesis holds for all sets of cardinality $k$ and let $\Delta_{k+1} = \Delta_k \cup \{x\} \subset \bZ^d$ be a set with $k+1$ elements. 
  Then, for any non-trivial partition $\Delta_{k+1}=\Delta^{(0)} \cup \Delta^{(1)}$, we have that
  \begin{align*}
    &\mu_{\lambda} \bigl(X(\Delta^{(1)}) \equiv 1 ,\, X(\Delta^{(0)}) \equiv 0 | X(\Delta_{k+1}^c) \equiv 0\bigr)
    \\&\qquad =  \mu_{\lambda}(X(\Delta^{(1)}) \equiv 1 \mid X(\Delta^{(0)} \cup  \Delta_{k+1}^c) \equiv 0 ) 
     \, \mu_{\lambda}( X(\Delta^{(0)} ) \equiv 0  \mid X(\Delta_{k+1}^c) \equiv 0 \bigr).
  \end{align*}
  Since the first term on the right-hand side equals zero by the induction hypothesis, it follows that on the event \( X(\Delta_{k+1}^c)\equiv 0 ,\) \( X \) concentrates on either having all $1$'s or all $0$'s $\text{ on } \Delta_{k+1}$. %
  In particular, it follows that 
  \begin{equation}\label{eq:absurd2}
    \mu_{\lambda} \bigl(X(\Delta_{k+1}) \equiv 1 \mid X(\Delta_{k+1}^c) \equiv 0 \bigr) = \mu_{\lambda}\bigl(X(x)= 1 \mid X(\Delta_{k+1}^c) \equiv 0  \bigr). 
  \end{equation}
  Using Lemma~\ref{lemma: old claim}, stated below, it follows that this can only be true if both sides are equal to zero.
  Hence, we have that 
  \begin{equation*} 
    \mu_{\lambda} \bigl( X(\Delta_{k+1} ) \not\equiv 0 \mid X(\Delta_{k+1}^c) \equiv 0 \bigr) = 0.
  \end{equation*}
  From this the desired conclusion immediately follows. 
\end{proof}

\begin{lemma}\label{lemma: old claim}
  In the setting of the proof of Theorem~\ref{theorem:cp2}\ref{item: cp2b}, there is \( a \in (0,1) \) such that 
  \begin{equation}\label{eq: claimeq}
    \mu_{\lambda} \bigl( X(\Delta_k) \equiv 1 \mid X(\Delta_{k+1}^c) \equiv 0 \bigr) \leq a \mu_{\lambda} \bigl( X(x)=1 \mid X(\Delta_{k+1}^c) \equiv 0 \bigr).
  \end{equation}
\end{lemma}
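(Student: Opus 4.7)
The first step is to reduce the lemma to proving that the single limit quantity $P := \mu_{\lambda}(X(\Delta_{k+1}) \equiv 1 \mid X(\Delta_{k+1}^c) \equiv 0)$ vanishes. Interpret every conditional probability in the statement as a monotone limit along an increasing sequence $I_n \uparrow \Delta_{k+1}^c$ (for concreteness, $I_n = \Lambda_n \setminus \Delta_{k+1}$), as justified by the d-FKG property. Applying the induction hypothesis to the cardinality-$k$ set $\Delta_k$ yields $\mu_{\lambda}(X(\Delta_k) \equiv 1, X(x) = 0 \mid X(\Delta_{k+1}^c) \equiv 0) = 0$, so in the limit the left-hand side of~\eqref{eq: claimeq} coincides with $P$. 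Running the analogous argument for every non-trivial partition of $\Delta_{k+1}$, as in the text preceding Lemma~\ref{lemma: old claim}, forces the right-hand side of~\eqref{eq: claimeq} to equal $P$ as well. Hence the existence of $a \in (0, 1)$ satisfying the inequality is equivalent to $P = 0$.

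To prove $P = 0$, I would mimic the pre-generator argument of Proposition~\ref{lem:cp1b} but at cardinality $k+1$ rather than $1$. Applying the stationarity relation $\int L_\lambda g \, d\mu_\lambda = 0$ to the cylinder indicator $g(\omega) = \ind(\omega(\Delta_{k+1}) \equiv 1, \omega(I_n) \equiv 0)$ and carefully tracking the transitions that leave $\{g = 1\}$ (healing at a site of $\Delta_{k+1}$; infection of a site of $I_n$ from $\Delta_{k+1}$ or from outside $\Lambda_n$) and those that enter $\{g = 1\}$ (healing of a single extra $1$ at some $z \in I_n$; infection filling a missing $y \in \Delta_{k+1}$) yields, after division by $\mu_\lambda(X(I_n) \equiv 0)$, a balance equation of the schematic form
\begin{equation*}
  \bigl(|\Delta_{k+1}| + \lambda|\partial \Delta_{k+1}|\bigr) \,\mu_\lambda\bigl(X(\Delta_{k+1}) \equiv 1 \bigm| X(I_n) \equiv 0\bigr) \;=\; R_n \;+\; \lambda \sum_{y \in \Delta_{k+1}} \deg_{\Delta_{k+1}}(y)\, \sigma_n^{(y)} \;+\; B_n,
\end{equation*}
where $\sigma_n^{(y)}$ is the conditional probability that $\Delta_{k+1}$ is all-$1$ except at $y$, $R_n$ collects the analogous conditional probabilities with one extra $1$ at some $z \in I_n$ (summed over $z$), and $B_n$ gathers the contributions from sites outside $\Lambda_n$.

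Each $\sigma_n^{(y)}$ tends to $0$ as $n \to \infty$ by the induction hypothesis at level $k$ applied to $\Delta_{k+1} \setminus \{y\}$, and the terms $B_n$ are dominated by the standard surface-to-volume estimate together with d-FKG, exactly as in the proof of Proposition~\ref{lem:cp1b}. The main obstacle, where I expect the proof to concentrate its effort, is to show that $R_n \to 0$. For each $z$ deep inside $I_n$, the underlying event places an isolated $1$ at $z$ inside a vast $0$-region, and Proposition~\ref{lem:cp1b} supplies a polynomial decay of each individual summand in the distance from $z$ to the boundary of that region; but summing this decay over $z \in I_n$ uniformly in $n$ requires the pointwise bound to beat the growth of $|I_n|$. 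A natural route is to iterate the pre-generator argument -- essentially nesting Proposition~\ref{lem:cp1b} inside the conditioning on $X(I_n) \equiv 0$ -- or to invoke exponential decay of correlations under $\mu_\lambda$ in the supercritical regime. Once $R_n \to 0$ is in place, the displayed identity forces $\mu_\lambda(X(\Delta_{k+1}) \equiv 1 \mid X(I_n) \equiv 0) \to 0$, hence $P = 0$, so that the lemma holds with any $a \in (0,1)$.
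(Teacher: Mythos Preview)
Your reduction is correct: under the induction hypothesis, both sides of~\eqref{eq: claimeq} equal $P \coloneqq \mu_\lambda\bigl(X(\Delta_{k+1})\equiv 1 \mid X(\Delta_{k+1}^c)\equiv 0\bigr)$, so the lemma is equivalent to $P=0$. The gap is in the pre-generator route you propose for $P=0$, precisely at the term $R_n$. The quantitative content of Proposition~\ref{lem:cp1b} is only $\mu_\lambda\bigl(X(z)=1 \mid X(\Lambda_m(z)\setminus\{z\})\equiv 0\bigr) \lesssim |\partial\Lambda_m|/|\Lambda_m| \sim m^{-1}$, so an individual summand of $R_n$ is at best of order $1/\mathrm{dist}\bigl(z,\Delta_{k+1}\cup\partial\Lambda_n\bigr)$. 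Summed over $z\in I_n$ this is already $\asymp\log n$ in dimension one and diverges polynomially in higher dimensions; it does not beat $|I_n|$. Neither proposed fix is convincing: iterating the stationarity identity does not visibly sharpen the $m^{-1}$ rate, and supercritical exponential decay of correlations is a statement about the unconditioned measure, not about the conditional law given the vanishing-probability event $\{X(I_n)\equiv 0\}$. As written, $R_n\to 0$ is unproved, and the rest of the argument collapses without it.

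The paper sidesteps this entirely by not aiming at $P=0$ directly. It factorises
\[
  \mu_\lambda\bigl(X(\Delta_k)\equiv 1 \mid X(\Delta_{k+1}^c)\equiv 0\bigr)
  = \mu_\lambda\bigl(X(\Delta_k)\equiv 1 \mid X(x)=1,\, X(\Delta_{k+1}^c)\equiv 0\bigr)\cdot \mu_\lambda\bigl(X(x)=1 \mid X(\Delta_{k+1}^c)\equiv 0\bigr)
\]
(the induction hypothesis kills the $X(x)=0$ contribution) and then shows the first factor is strictly less than $1$ by a short local argument in the graphical representation: for small $\delta>0$, conditionally on $\eta_\delta(x)=1$ and $\eta_\delta(\Delta_{k+1}^c)\equiv 0$, with positive probability no infection arrow touches $\Delta_{k+1}$ during $[0,\delta]$ (this uses positive association and that the conditioning is $\delta$-independent), while independently a healing mark falls on some site of $\Delta_k$, forcing $\eta_\delta(\Delta_k)\not\equiv 1$. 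This produces an explicit $a<1$ with no summation over $I_n$ at all; the conclusion $P=0$ then comes a posteriori from~\eqref{eq:absurd2}.
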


To prove Lemma~\ref{lemma: old claim}, we will use the graphical representation of the contact process.

\begin{proof}[Proof of Lemma~\ref{lemma: old claim}]
  Recall that \( \Delta_{k+1} = \Delta_k \cup \{ x \}. \) Note that, by~\eqref{eq:absurd2}, the inequality in~\eqref{eq: claimeq} trivially holds if 
  \begin{equation}
    \bP_{\lambda} \bigl(X(x) = 1 \mid X (\Delta_{k+1}^c)\equiv 0 \bigr)=0.
  \end{equation}   
  We, therefore, assume that this quantity is strictly positive. Then, on the induction hypothesis made in the proof of Theorem~\ref{theorem:cp2}\ref{item: cp2b}, we have that
  \begin{align*}
    & \mu_{\lambda} \bigl( X(\Delta_{k}) \equiv 1 \mid X( \Delta_{k+1}^c ) \equiv 0 \bigr)
    \\ & \quad= \mu_{\lambda} \bigl( X(\Delta_{k}) \equiv 1 \mid X(x)=1, X( \Delta_{k+1}^c ) \equiv 0 \bigr) \mu_{\lambda} \pigl( X(x)=1 \mid X\bigl( \Delta_{k+1}^c \bigr) \equiv 0 \pigr). 
  \end{align*} 
  We will argue that $\mu_{\lambda} \bigl( X(\Delta_{k}) \equiv 1 \mid X(x)=1, X( \Delta_{k+1}^c ) \equiv 0 \bigr)<1$ from which the claim of the lemma immediately follows. For this, we will use the graphical construction of the contact process $(\eta_t)$ as given by \eqref{def:GP_CP}.  
  To this end, for $\delta>0$, let $B_{\delta}$ be the event that there is an infection event from or to a vertex in $\Delta_{k+1}$ within the time interval $[0,\delta]$. 
  Then, since $B_{\delta}$ is an increasing event,  
  and $\{\eta_{\delta}( \Delta_{k+1}^c ) \equiv 0\}$ is a decreasing event 
  (with respect to the percolation substructure obtained from the graphical construction 
  ), using that the contact process is positively associated (in space-time), we have that 
  \begin{equation}\label{eq: numerator}
    \bP_{\lambda} \bigl( B_{\delta} \mid \eta_{\delta}(\Delta_{k+1}^c ) \equiv 0 \bigr) \leq \bP_{\lambda}(B_{\delta} ).
  \end{equation}
  Next, by definition, we have
  \begin{align}\label{eq:lemcp555help}
    \bP_{\lambda} \bigl(B_{\delta} \mid \eta_{\delta}(x) = 1 ,\, \eta_{\delta}( \Delta_{k+1}^c ) \equiv 0  \bigr) = &\frac{\bP_{\lambda} \bigl(B_{\delta}, \eta_{\delta}(x) =1 \mid \eta_{\delta}( \Delta_{k+1}^c) \equiv 0 \bigr) }{\bP_{\lambda} \bigl(\eta_{\delta}(x) = 1 \mid \eta_{\delta}( \Delta_{k+1}^c ) \equiv 0 \bigr)}.
  \end{align}
  Since \( \eta_\delta\sim \mu_{\lambda}\) for any $\delta\geq0$, the denominator on the right-hand side of~\eqref{eq:lemcp555help} does not depend on \( \delta.\) Therefore, since \( \lim_{\delta \to 0} \bP_{\lambda}(B_{\delta} )= 0,\) using~\eqref{eq: numerator}, it follows that 
  \begin{equation}
    \lim_{\delta \to 0} \bP_{\lambda} \bigl(B_{\delta} \mid \eta_{\delta}(x)= 1,\, \eta_{\delta}( \Delta_{k+1}^c) \equiv 0 \bigr) = 0,
  \end{equation}
  and hence
  \begin{equation}
    q:= \bP_{\lambda} \bigl(B_{\delta}^c \mid \eta_{\delta}(x)= 1,\, \eta_{\delta}( \Delta_{k+1}^c) \equiv 0 \bigr)
  \end{equation}
  is strictly positive for all $\delta$ sufficiently small. 

  Now note that the event 
  \begin{equation*}
    \mathcal{E} \coloneqq \bigl\{ B_{\delta}^c,\, \eta_{\delta}(x)= 1 ,\, \eta_{\delta}( \Delta_{k+1}^c) \equiv 0 \bigr\}
  \end{equation*} 
  does not reveal any information about the healing events on $\Delta_k$ within the time interval $[0,\delta]$. 
  Moreover, with a strictly positive probability, say $p=p(\delta)>0$, the event $\mathcal{R}$ that there is a healing event at one of the vertices of $\Delta_k$ within this time interval occurs.  
  Hence, in the event that both $\mathcal{R}$ and $\mathcal{E}$ occur, we necessarily have that $\eta_\delta(\Delta_k) \not\equiv 1$. 
  Consequently, for $\delta>0$ sufficiently small, it holds that
   \begin{align*}
   \bP_{\lambda} \bigl(\eta_{\delta}(\Delta_k) \equiv 1 \mid \eta_{\delta}(x)=1, \eta_{\delta}( \Delta_{k+1}^c ) \equiv 0  \bigr) 
   < 1 - qp < 1.
      \end{align*} 
     From this, using that $\eta_{\delta} \sim \mu_{\lambda}$, 
    we conclude the proof. 
\end{proof}

\subsection{The upper invariant measure is directional mixing}

This subsection is devoted to the proof of Theorem~\ref{theorem:cp2}\ref{item: cp2a}. Our motivation for this statement stems from~\cite[Theorem 4.1 and Corollary 4.1]{LiggettSteifSD2006} which says that for the contact process on $\bZ^d$ with $\lambda>\lambda_c$ there is a $\rho= \rho(\lambda) >0$ such that, for any any $y=(y_1,\dots,y_d) \in \bZ^d$ and any disjoint finite sets $A,B \subset \bZ_{<y}^d$, it holds that
\begin{equation}\label{eq:domiCP_LS}
\mu_{\lambda} (X(y)=1 \mid X \equiv 0 \text{ on } A, X \equiv 1 \text{ on } B ) \geq \rho(\lambda),
\end{equation}
where $\bZ_{<y}^d$ denotes the set of $x=(x_1,\dots,x_d)\in \bZ^d$ such that $x_1<y_1$ or $x_i=y_i$ for $i=1,\dots,k$ and $x_{k+1}<y_{k+1}$ for some $k =1,\dots,d-1$.

In the terminology of the graphical construction, \eqref{eq:domiCP_LS} gives that there with positive probability is an infinite active path ending at the origin at time $0$, regardless of whether this happens for any point "to the left of" (with respect to the lexicographic ordering on $\bZ^d$) the origin. What makes \eqref{eq:domiCP_LS} particularly powerful is that this holds in a conditional sense and regardless of how unlikely the conditional event is.

Our proof of Theorem~\ref{theorem:cp2}\ref{item: cp2a} uses the inequality \eqref{eq:domiCP_LS} as an essential input. 
We first provide the proof of Theorem~\ref{theorem:cp2} in the case $d=1$, where we can give a short (and perhaps more elegant) argument. 
 
\begin{proof}[Proof of Theorem~\ref{theorem:cp2}\ref{item: cp2a} when $d=1$]
  By translation invariance, it suffices to show that 
  \begin{equation}\label{eq:lem:cp111111111111 d1}
    \lim_{n \rightarrow \infty}  \mu_{\lambda}\pigl( X(0) = 0 \mid X\bigl( (-\infty, -n) \bigr) \equiv 0 \pigr) 
     = \mu_{\lambda}\bigl(X(0)=0 \bigr).
  \end{equation}
  To this end, let 
  \[
    \rho \coloneqq \lim_{n \rightarrow \infty} \mu_{\lambda}\pigl(X(0)=1 | X\bigl( (-n,-1] \bigr) \equiv 0 \pigr).
  \]
  By~\eqref{eq:domiCP_LS}, \( \rho >0\) whenever $\lambda>\lambda_c$. 
  We will argue that, for any $n\geq 1$, 
  \begin{equation}\label{eq:convergenceCP}
    \mu_{\lambda}\bigl(X(0)=1\bigr) - \mu_{\lambda}\pigl(X(0)=1 \mid X\bigl( (-\infty-n] \bigr) \equiv 0 \pigr) \leq (1-\rho)^n.
  \end{equation}
  Note that~\eqref{eq:lem:cp111111111111 d1}  
  immediately follows from~\eqref{eq:convergenceCP} since, by the d-FKG property, we have 
  \[
     \mu_{\lambda}\pigl(X(0)=1 \mid X\bigl((-\infty, -n]\bigr) \equiv 0  \pigr) \leq \mu_{\lambda}\bigl( X(0)=1\bigr).
  \]
To see that~\eqref{eq:convergenceCP} holds, for $i > -n$, consider the event  
  \begin{equation}
    \cE_i \coloneqq \{ X\bigl((-n,i)\bigr) \equiv 0 \quad\text{and}\quad X(i)=1 \}.
  \end{equation} 
  Then, we can write 
  \begin{equation}\begin{split}\label{eq: expr as sum}
    &\mu_{\lambda}\pigl(X(0)=1 \mid X\bigl( (-\infty,-n] \bigr) \equiv 0\pigr) 
    \\ &\qquad= \sum_{i=-(n-1)}^0 \mu_{\lambda}\pigl(X(0)=1 ,\, \mathcal{E}_i \mid X\bigl( (-\infty,-n] \bigr) \equiv 0 \pigr). 
  \end{split}\end{equation}
  We next describe how to control the terms within the sum of \eqref{eq: expr as sum}. For this, 
  by~\cite[Theorem 2]{BergHaggstromKahnKonno2006} (and the remark immediately following its statement), 
  for any ${i \in \{ -(n-1),\dots, -1\}}$ and any $m \geq n,$ we have 
  \begin{align*}
    &\mu_{\lambda} \pigl( X(0)=1,\, X\bigl( (-m,i) \bigr) \equiv 0  \mid X(i) = 1 \pigr) 
    \\&\qquad\geq 
    \mu_{\lambda} \bigl( X(0)=1  \mid X(i) = 1 \bigr) 
    \mu_{\lambda} \pigl( X\bigl( (-m,i) \bigr) \equiv 0   \mid X(i) = 1 \pigr).
  \end{align*}
  Dividing by $\mu_{\lambda} \bigl( X\bigl( (-m,i) \bigr) \equiv 0  \mid X(i) = 1 \bigr)$ on both sides yields 
  \begin{equation}
    \mu_{\lambda} \left( X(0)=1 \mid X\bigl( (-m,i) \bigr) \equiv 0 ,\, X(i) = 1 \right) \geq \mu_{\lambda} \bigl( X(0)=1  \mid X(i) = 1 \bigr).
  \end{equation}
  Taking the limit \( m \to \infty, \) we obtain
  \begin{equation}
    \mu_{\lambda} \pigl( X(0)=1 \mid X \bigl( (-\infty,-n] \bigr) \equiv 0 ,\, \mathcal{E}_{i} \pigr) 
    \geq 
    \mu_{\lambda} \bigl( X(0)=1 \mid X(i) = 1 \pigr)\geq \mu_{\lambda} \bigl( X(0)=1  \bigr).
  \end{equation}
  Since
  \begin{align*}
    &\mu_{\lambda} \pigl( X(0)=1 ,\, \mathcal{E}_{i} \mid X \bigl( (-\infty,-n] \bigr) \equiv 0 \pigr) 
    \\&\qquad=
    \mu_{\lambda} \pigl( X(0)=1  \mid X \bigl( (-\infty,-n] \bigr) \equiv 0 ,\, \mathcal{E}_{i} \pigr) 
    \mu_{\lambda} \pigl( \mathcal{E}_{i} \mid X \bigl( (-\infty,-n) \bigr) \equiv 0 \pigr) ,
  \end{align*}
it follows that 
  \begin{align*}
    & \mu_{\lambda}\pigl(X(0)=1 ,\, \mathcal{E}_i \mid X\bigl( (-\infty,-n] \bigr) \equiv 0 \pigr) 
    \\&\qquad \geq \mu_{\lambda}\bigl(X(0)=1 \bigr) \mu_{\lambda}\pigl( \mathcal{E}_i \mid X\bigl( (-\infty,-n] \bigr) \equiv 0 \pigr) .
  \end{align*}
  Inserting this into~\eqref{eq: expr as sum}, 
  we obtain
  \begin{align*}
    &\mu_{\lambda}\pigl(X(0)=1 \mid X\bigl( (-\infty,-n] \bigr) \equiv 0\pigr) 
    \\&\qquad \geq \mu_{\lambda}\bigl(X(0)=1 \bigr) \sum_{i=-(n-1)}^{-1} \mu_{\lambda}\pigl( \mathcal{E}_i \mid X\bigl( (-\infty,-n] \bigr) \equiv 0 \pigr).
  \end{align*} 
  Therefore, by construction, we have
  \begin{align}
    &\sum_{i=-(n-1)}^{0} \mu_{\lambda}\pigl( \mathcal{E}_i \mid X\bigl( (-\infty,-n] \bigr) \equiv 0 \pigr)  
    = 1-(1-\rho)^{n},
  \end{align}
and from which we conclude~\eqref{eq:convergenceCP}.
\end{proof}

  We now turn to the proof of Theorem~\ref{theorem:cp2}\ref{item: cp2a} for general \( d \geq 1\), which uses the following extension of~\cite[Corollary 4.1]{LiggettSteifSD2006} as the key technical lemma.

\begin{lemma}\label{lem:CPdomi}
  Let $\lambda> \lambda_c$. Then there is $\rho=\rho(\lambda)>0$ such that, for any $L\geq 0$, 
  the distribution
  \begin{equation*}
    \lim_{n \rightarrow \infty}    \bP_{\lambda} \pigl( \eta_0 \in \cdot \mid \eta_L\bigl(\bZ^{d-1} \times (-\infty, -n)\bigr) \equiv 0 \pigr)
  \end{equation*} 
  stochastically dominates a Bernoulli product measure with success probability $\rho$. 
\end{lemma}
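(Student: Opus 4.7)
The plan is to prove this statement as a conditional version of \cite[Theorem 4.1 and Corollary 4.1]{LiggettSteifSD2006}, which give exactly the desired Bernoulli stochastic domination for the unconditional measure \( \mu_\lambda \). Recall that the Liggett-Steif argument has two ingredients: the pointwise bound \eqref{eq:domiCP_LS} and a standard greedy coupling along the lexicographic order on \( \bZ^d \). My goal is to establish the analogue of \eqref{eq:domiCP_LS} that also absorbs the strip conditioning at time \( L \), and then re-run the same greedy coupling in the limit \( n\to\infty \).

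As a preliminary step, I would check that the limit in \( n \) is well defined. The strip conditioning is a decreasing function of the graphical Poisson structure, so positive association of the Harris graphical construction gives that \( \bP_\lambda(\eta_0\in A\mid \eta_L\equiv 0\text{ on strip}) \) is monotone in \( n \) for every decreasing cylinder \( A \) in \( \eta_0 \); this is analogous to the d-FKG remark following Theorem~\ref{theorem:cp2} but for a two-time event, and it guarantees convergence of all finite-dimensional marginals.

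The core step is to show that for every \( y\in\bZ^d \), every finite \( B\subset\bZ^d_{<y} \) (lexicographic), and every \( \omega\in\{0,1\}^B \),
\[
\liminf_{n\to\infty}\bP_\lambda\pigl(\eta_0(y)=1\bigm|\eta_0|_B=\omega,\ \eta_L\bigl(\bZ^{d-1}\times(-\infty,-n)\bigr)\equiv 0\pigr)\geq \rho.
\]
Following Liggett and Steif, I would witness the event \( \{\eta_0(y)=1\} \) by an infinite backward active path from \( (y,0) \) that lies entirely in the eastern half-space \( \bZ^d_{>y} \); by supercriticality such a witnessing path exists with probability at least some \( \rho=\rho(\lambda)>0 \), and its existence is measurable with respect to graphical Poisson data that is independent of everything in \( \bZ^d_{<y}\times\bR \). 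To absorb the strip conditioning, I would observe that the event \( \{\eta_L\equiv 0\text{ on strip}\} \) is measurable with respect to the backward light-cone of \( \bZ^{d-1}\times(-\infty,-n)\times\{L\} \), which by the finite speed-of-propagation / linear-growth estimates for the contact process (e.g.\ \cite{DurrettSchonmann1988}) is, up to events of exponentially small probability in the spatial scale, localized to the far west for large \( n \). For fixed \( y \) and \( L \), this region is disjoint from the eastern witnessing path, so the conditional probability inherits the lower bound \( \rho \).

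With the conditional lower bound in hand, the Liggett-Schonmann-Stacey type greedy construction along the lexicographic order on \( \bZ^d \) produces a Bernoulli(\( \rho \)) product measure that is stochastically dominated by the limiting conditional law of \( \eta_0 \). The main obstacle I expect is the decoupling step: because the strip is infinite in the \( x_1\to -\infty \) direction, its backward light-cone is never literally localized, so the argument must truncate the strip at a much larger scale \( -N\ll -n \), control the contribution of \( x_1<-N \) via large-deviation bounds on the speed of infection, and then combine the two scales to obtain a bound that is uniform in the finite data \( (y,B,\omega,L) \). Translation invariance of the graphical construction is what lets \( \rho \) be chosen independently of \( y \), and matches the \( \rho \) appearing in the unconditional Liggett-Steif bound.
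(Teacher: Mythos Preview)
Your high-level plan (establish a conditional version of the Liggett--Steif pointwise bound, then run the sequential coupling along the lexicographic order) matches the paper, but the decoupling step as you describe it has a genuine gap. You claim that the eastern-path witness $E_y\subset\{\eta_0(y)=1\}$ is measurable with respect to the graphical data in $\bZ^d_{\geq y}\times\bR$ and therefore ``independent of everything in $\bZ^d_{<y}\times\bR$.'' The measurability is correct, but independence from the \emph{conditioning event} does not follow: neither $\{\eta_0|_B=\omega\}$ nor $\{\eta_L(\text{strip})\equiv 0\}$ is measurable with respect to the western graphical data alone, since the infinite backward paths determining $\eta_0(b)$ and $\eta_L(z)$ can (and typically do) enter $\bZ^d_{\geq y}\times(-\infty,0]$. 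In fact, by positive association of the graphical construction, conditioning on the decreasing event $\{\eta_0|_A\equiv 0\}$ \emph{lowers} the probability of the increasing event $E_y$, so the eastern-path bound does not survive the conditioning in the way you want. Your ``main obstacle'' paragraph addresses only the strip part and does not repair this: the conditioning on $\eta_0|_B=\omega$ sits right next to $y$ and cannot be pushed far west by taking $n$ large.

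The paper's device for this step is different and is the missing idea. Rather than decoupling, it introduces tilted space--time tubes $\Delta^{(L,\phi)}(i)$ and sets $Z^{(L,\phi)}(i)\coloneqq\max\{\eta_t(x):(x,t)\in\Delta^{(L,\phi)}(i)\}$. The process $(Z^{(L,\phi)}(i))_{i\in\bZ^d}$ inherits d-FKG from the space--time field and satisfies $Z^{(L,\phi)}(i)\geq\eta_0(i)$, so the Liggett--Steif theorem applies \emph{directly to the $Z$-process} and yields $\bP\bigl(Z^{(L,\phi)}(o)=1\mid Z^{(L,\phi)}(\bZ^d_{<o})\equiv 0\bigr)\geq\rho$. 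With the tilt $\phi=n/L$, the tubes are arranged so that both the time-$0$ conditioning on $\{z_1,\dots,z_{i-1}\}\subset\bZ^d_{<z_i}$ \emph{and} the time-$L$ strip conditioning are implied by $\{Z^{(L,\phi)}(\bZ^d_{<z_i})\equiv 0\}$; d-FKG then gives the desired conditional lower bound without any independence argument. The large-deviation and light-cone estimates you invoke do appear, but only for the much more localized task of showing that $\{Z^{(L,\phi)}(o)=1\}$ and $\{\eta_0(o)=1\}$ agree up to $o_\phi(1)$ under the $Z$-conditioning.
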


\begin{proof}
 Fix $L>0$, let $\phi>0,$ and consider a partition $(\Delta^{(L,\phi)}(i))_{i \in \bZ^d}$ of $\bZ^d \times [0,L]$ where, for \( {i = (i_1,\dots, i_d) \in \mathbb{Z}^d} \) and writing $x=(x_1,\dots,x_d)$, we let 
  \begin{equation}
    \Delta^{(L,\phi)}(i) \coloneqq  \bigl\{ (x,t) \in \bZ^d \times [0,L] \colon x_j \in [i_j - \phi t, i_j - \phi t +1),\, j=1,\dots,d \bigr\}. 
  \end{equation}

 Consider the random variables  
  $(Z^{(L,\phi)}(i))_{i \in \bZ^d}$ given by
  \begin{equation}
    Z^{(L,\phi)}(i) \coloneqq \max \bigl\{ \eta_t(x) \colon (x,t) \in \Delta^{(L,\phi)}(i) \bigr\}, \quad i \in \bZ^d.
  \end{equation} 
  Since the d-FKG property is preserved under taking maximum and the random variables \( (\eta_t (x)) \) have the d-FKG property, the random variables $(Z^{(L,\phi)}(i))$ also have the d-FKG property; see~\cite[Lemma 2.1 and Lemma 2.2]{BergBethuelsen2018}.  
  Further, note that 
  \[
    \{Z^{(L,\phi)}(\Lambda_n) \equiv 0 \} \subset \{\eta_0( \Lambda_n) \equiv 0 \}.
  \]
   Therefore, recalling that $\mu_{\lambda}$ stochastically dominates $\pi_{\rho}$ for some $\rho>0$, as concluded in~\cite[Corollary 4.1]{LiggettSteifSD2006},  
  we obtain from~\cite[Theorem 4.1]{LiggettSteifSD2006} that  
\begin{equation}\label{eq:ZLbound} 
    \bP_{\lambda} \bigl(Z^{(L,\phi)}(o) =1 \mid Z^{(L,\phi)}(\bZ^d_{<o}) \equiv 0 \bigr) \geq \rho.
  \end{equation}

  Below, we argue that this implies that 
  \begin{equation}\label{eq:keyForDomi}
  \lim_{\phi \rightarrow \infty} \bP_{\lambda} \bigl(\eta_0(o) =1 \mid Z^{(L,\phi)}(\bZ^d_{<o}) \equiv 0 \bigr) 
  \geq \rho.
  \end{equation} 
  For this, first note 
   that for the event $\{Z^{(L,\phi)}(o) = 1\}$ in \eqref{eq:ZLbound} to hold, either the event $ \mathcal{E}_1 \coloneqq \{ \eta_0(o)=1 \}$ holds, or the event \( \mathcal{E}_2 \) that $ \{ \eta_0(o)=0 \}$ and there is an infinite active path ending at some other space-time location of $\Delta^{(L,\phi)}(o)$ holds. We will argue that the probability of the latter event decays to $0$ when $\phi$ is made large. 

  For $A,B \subset \bZ^d \times \bR$, write $A\rightarrow B$ for the event that an active path exists starting in $A$ and ending in $B$. 
  Then, for any \( R>0 \) we have that 
  \begin{align}
    &\bP_{\lambda} \bigl(\mathcal{E}_2 \mid Z^{(L,\phi)}(\bZ^d_{<o}) \equiv 0 \bigr) \label{eq: zeroeth term}
    \\&\quad\leq
    \bP_{\lambda}\pigl( \bigl(\bZ^d\setminus \{0\}) \times \{0\} \rightarrow \Delta^{(L,\phi)}(o) \bigr) \cap \eta_0(o)=0 \mid Z^{(L,\phi)}(\bZ^d_{<o}) \equiv 0 \pigr)
    \\ &\quad \leq \bP_{\lambda}\pigl((\bZ^d\setminus \{0\}) \times \{0\} \rightarrow \bigl(\Delta^{(L,\phi)}(o) \cap (\Lambda_R\times [0,L]) \bigr)\mid Z^{(L,\phi)}(\bZ^d_{<o}) \equiv 0 \pigr)\label{eq: first term}
    \\ &\qquad\qquad+ \bP_{\lambda}\pigl(
    (\bZ^{d}\setminus \bZ^d_{<o}) \times\{0\} \rightarrow \bigl(\Delta^{(L,\phi)}(o) \cap (\bZ^d \setminus \Lambda_R ) \times [0,L] \bigr)\pigr). \label{eq: second term} 
\end{align}  
  To bound the second term, we first recall from~\cite[Theorem 1.4]{GaretMarchandLDP2014} that there is a constants $\mu>0$ so that, for any $\epsilon>0$ there are $C,c>0$, depending only on $\lambda$ and $d$, such that,
  \begin{equation}  
    \label{eq:LDB_GM14}
    \bP_{\lambda} \pigl( \inf_{t>0} \bigl\{ (o,0) \rightarrow (x,t) \bigr\} \leq (1-\epsilon) \mu \norm{x} \pigr) \leq Ce^{-c\norm{x}}.
  \end{equation} 
  Further, note that any point $(x,t) \in \Delta^{(L,\phi)}(o) \setminus (\Lambda_R \times [0,L])$ is such that the spatial coordinate $x \in \bZ^d$ is at least at distance $R$ from $\bZ^d \setminus \bZ^d_{<o}$.
  Therefore, with $\epsilon=1/2$, whenever $R\geq 2L/\mu$, we have that 
  \begin{align}
    & \bP_{\lambda}\pigl( (\bZ^d \setminus \bZ^d_{<o}) \times \{0\} \rightarrow \bigl(\Delta^{(L,\phi)}(o) \cap (\bZ^d \setminus \Lambda_R ) \times [0,L] \bigr) \pigr)
    \\ \leq & \sum_{x \in \bZ^d \setminus \bZ^d_{<o}} \quad \sum_{\substack{y \in \mathbb{Z}^d \smallsetminus\Lambda_R \colon \\ \{y\}\times [0,L] \cap \Delta^{(L,\phi)}(o) \neq \emptyset}}
     \bP_{\lambda}\pigl( \inf_{t>0} \bigl\{ (x,0) \rightarrow (y,t) \bigr\} \leq L \pigr)
     \\ \leq & \sum_{x \in \bZ^d \setminus \bZ^d_{<o}} \quad \sum_{\substack{y \in \mathbb{Z}^d \smallsetminus\Lambda_R \colon \\ (\{y\}\times [0,L]) \cap \Delta^{(L,\phi)}(o) \neq \emptyset}} Ce^{-c\norm{y-x}} 
    \leq C_1e^{-c_1R},
  \end{align}
  for some constants $C_1,c_1 \in (0,\infty)$ only depending on $\lambda, d$ and $L$.
  Thus, the probability in~\eqref{eq: second term} can be made arbitrarily close to $0$ by tuning $R$ large. 
  Next, we claim that the probability in~\eqref{eq: first term} can be made arbitrarily close to zero by choosing $\phi$ large.  
  To see this, note that given a vertex \( x,\) the probability that there is an infection arrow from \( x\) in the time-window $[0,\phi^{-1}]$ is given by $1-e^{-\frac{2d\lambda}{\phi}}$. Hence, by translation invariance and the d-FKG property of the process, and again applying a standard union bound, it follows that
  \begin{align}
    &\bP_{\lambda}\pigl((\bZ^d\setminus \{0\}) \times \{0\} \rightarrow \bigl(\Delta^{(L,\phi)}(o) \cap (\Lambda_R\times [0,L]) \bigr) \mid Z^{(L,\phi)}(\bZ^d_{<o}) \equiv 0 \pigr) 
    \\ &\qquad \leq 
    \bP_{\lambda}\pigl( \exists (y,t) \in \bigl(\Delta^{(L,\phi)}(o) \cap (\Lambda_R\times [0,L]) \bigr) \colon N_{x,y}=t \text{ for some } x\sim y \pigr) 
    \\ &\qquad \leq |\Lambda_R| (1-e^{-\frac{2d\lambda}{\phi}}) .
  \end{align}
  By combining the upper bounds for~\eqref{eq: first term} and~\eqref{eq: second term}, it follows that~\eqref{eq: zeroeth term} can be made arbitrarily small by taking \( R \) and \( \phi \) large. 
  This implies, in particular that  
  \begin{align}
    \bP_{\lambda} \bigl(Z^{(L,\phi)}(o)=1 \mid Z^{(L,\phi)}(\bZ^d_{<o}) \equiv 0 \bigr) - \bP_{\lambda} \bigl( \mathcal{E}_1 \mid Z^{(L,\phi)}(\bZ^d_{<o}) \equiv 0 \bigr) 
  \end{align} 
  can be made arbitrarily small, by taking \( R \) and \( \phi \) large, and hence~\eqref{eq:ZLbound} implies~\eqref{eq:keyForDomi}.

  The claim of the lemma now follows by utilizing the d-FKG property and \eqref{eq:keyForDomi} via sequential coupling in a similar manner as in the proof of~\cite[Theorem 4.1]{LiggettSteifSD2006}, which we now explain. For this, consider the lexicographic order \( < \) on $\bZ^d$ 
  where $(x_1,\dots,x_d) <(y_1,\dots,y_d)$ if $x_1<y_1$ or, for some $k=1,\dots,d-1$, it holds that $x_i=y_i$ and $x_{k+1}<y_{k+1}$.  
  Any (finite) set $
  \Delta =\{z_1,\dots,z_m\} \subset \bZ^{d}$ can then be ordered so that, for every $i \in \{1,\dots,m\}$, we have 
  \[\{z_1,\dots,z_{i-1}\} \times \{0\} \subset \cup_{y<z_i}\Delta_y^{(L,\phi)} 
  .\] 
 For every such $i \in \{ 1,2, \dots, m \}$ and $\sigma \in \{0,1\}^{i-1}$, utilising the d-FKG property and setting $\phi=n/L$, we have that
  \begin{align}
    &\bP_{\lambda} \bigl( \eta_0(z_i)=1 \mid \eta_0(z_j)=\sigma_j, j=1,\dots,i-1 \text{ and } \eta_L\bigl(\bZ^{d-1} \times (-\infty, -n)\bigr) \equiv 0 \bigr)
    \\ &\qquad\geq \bP_{\lambda} \bigl( \eta_0(z_i)=1 \mid Z^{(L,n/L)}(\bZ^d_{< z_i}) \equiv 0 \bigr)
  \end{align} 
  \color{black}
  Therefore, letting $n\rightarrow \infty$, we can apply the bound in \eqref{eq:keyForDomi} and translation invariance of the process to obtain 
  \begin{equation}
    \lim_{n \rightarrow \infty}  \bP_{\lambda} \bigl( \eta_0(z_i)=1 \mid \eta_0(z_j)=\sigma_j, j=1,\dots,i-1 \text{ and } \eta_L\bigl(\bZ^{d-1} \times (-\infty, -n)\bigr) \equiv 0 \bigr) \geq \rho.
  \end{equation} 
  This yields the domination over a product Bernoulli distribution since any increasing event involving only the values attained on $\Delta$ can be decomposed into events of the form $\{ \eta_0(\tilde{\Delta})\equiv 1\}$ with $\tilde{\Delta} = \{\tilde{z}_1,\dots,\tilde{z}_l \} \subset \Delta$ and using that 
  \begin{align}
    &\bP_{\lambda} \pigl( \eta_0(\Delta) \equiv 1 \mid \eta_L\bigl(\bZ^{d-1} \times (-\infty, -n)\bigr) \equiv 0 \pigr)
    \\&\qquad= \prod_{i=1}^l \bP_{\lambda} \pigl( \eta_0(z_i)=1 \mid \eta_0(z_j)=1, j=1,\dots,i-1 \text{ and } \eta_L\bigl(\bZ^{d-1} \times (-\infty, -n)\bigr) \equiv 0 \pigr). 
  \end{align} 
\end{proof}

We now present the proof of Theorem~\ref{theorem:cp2}\ref{item: cp2a} for the general case when $d\geq 1.$ 
\begin{proof}[Proof of Theorem~\ref{theorem:cp2}\ref{item: cp2a}, $d\geq 1$]
  We aim to prove that,  for any $x \in \bZ^d$,
  \begin{equation}\label{eq: goal in ast proof}
    \lim_{n \rightarrow \infty}  \mu_{\lambda}\pigl( X(x) = 1 \mid X\bigl(\bZ^{d-1} \times (-\infty, -n) \bigr) \equiv 0 \pigr) = \mu_{\lambda}\bigl(X(x)=1 \bigr).
  \end{equation} 
  By translation invariance and the d-FKG property, it is sufficient to show that~\eqref{eq: goal in ast proof} holds for $x=o$. Moreover, by the d-FKG property, we know that 
  \begin{equation}
   \mu_{\lambda}\pigl( X(o) = 1 \mid X\bigl(\bZ^{d-1} \times (-\infty, -n) \bigr) \equiv 0 \pigr) \leq \mu_{\lambda}\bigl(X(o)=1 \bigr)
  \end{equation}
  for any $n \in \bN$. Therefore, to conclude \eqref{eq: goal in ast proof}, we will argue that 
  \begin{equation}\label{eq: goal in last proof}
    \lim_{n \rightarrow \infty}  \mu_{\lambda}\pigl( X(o) = 1 \mid X\bigl(\bZ^{d-1} \times (-\infty, -n) \bigr) \equiv 0 \pigr) \geq \mu_{\lambda}\bigl(X(o)=1 \bigr).
  \end{equation} 
  For this, recall the graphical construction of the contact process. For $L>0$ and $R\in \bN$, consider the (random) sets 
  \begin{align}
    D_{L,R} \coloneqq \{ x \in \Lambda_R \colon  (x,0) \rightarrow (o,L) \},\quad F_{R }\coloneqq \{ x \in \Lambda_R \colon  \eta_0(x)=1 \}.
  \end{align}
  For \( n \geq 1,\) using the time-stationarity of $(\eta_t)$ and the d-FKG property, we have 
  \begin{equation}\label{eq: long inequality}\begin{split}
    &\mu_{\lambda}\pigl( X(o) = 1 \mid X\bigl(\bZ^{d-1} \times (-\infty, -n) \bigr) \equiv 0 \pigr) 
    \\ &\qquad=\bP_{\lambda}(\eta_L(o)=1 \mid \eta_L \bigl(\bZ^{d-1} \times (-\infty, -n) \bigr) \equiv 0 \pigr) 
    \\ &\qquad\geq \bP_{\lambda}( D_{L,R} \cap F_{R} \neq \emptyset \mid \eta_L \bigl(\bZ^{d-1} \times (-\infty, -n) \bigr) \equiv 0 \pigr)
    \\  &\qquad\geq \sum_{\Delta \subseteq \Lambda_R} \bP_{\lambda}\pigl( D_{L,R} \cap \Delta \neq \emptyset \mid F_R = \Delta, \eta_L \bigl(\bZ^{d-1} \times (-\infty, -n) \bigr) \equiv 0 \pigr)
     \\  &\qquad \qquad\cdot\bP_{\lambda}\pigl( F_R = \Delta \mid \eta_L \bigl(\bZ^{d-1} \times (-\infty, -n) \bigr) \equiv 0 \pigr). 
  \end{split}\end{equation} 
  To lower bound this sum, we will need a few auxiliary inequalities, which we now state and prove. 
  First, by Lemma~\ref{lem:CPdomi}, we note that there are constants $C,c>0$ such that
  \begin{equation}\label{eq:used q}
    \lim_{n \rightarrow \infty} \bP_{\lambda}( |F_R| \geq \rho R/2 \mid \eta_L \bigl(\bZ^{d-1} \times (-\infty, -n) \bigr) \equiv 0 \pigr) \geq 1-Ce^{-cR}.
  \end{equation}
  Next, we claim that for any $\Delta \subseteq \Lambda_R$, 
  \begin{equation}\label{eq:claimEnd}
    \begin{split}
      \lim_{n \rightarrow \infty} \bP_{\lambda}( D_{L,R} \cap \Delta \neq \emptyset \mid F_R = \Delta, \eta_L \bigl(\bZ^{d-1} \times (-\infty, -n) \bigr) \equiv 0 \pigr) \geq \bP_{\lambda}( D_{L,R} \cap \Delta \neq \emptyset ).
    \end{split}
  \end{equation}
  The proof of this statement will be postponed to the end of this proof. 
  We now derive a few more useful inequalities. To this end, let $\Delta \subseteq \Lambda_R$.
  If we let $(\eta_L^{\Delta})$ denote the contact process initiated at time $0$ with $\eta_0(\Delta)\equiv 1$ and $\eta_0(\Delta^c)\equiv 0$, then
  \begin{equation}
    \bP_{\lambda}( D_{L,R} \cap \Delta \neq \emptyset ) = \bP_{\lambda} \bigl( \eta_L^{\Delta} (o)=1 \bigr).
  \end{equation}

  Thus, letting $L \rightarrow \infty$ and defining $\tau^{\Delta} \coloneqq \inf \{ t>0 \colon \eta_t^{\Delta} \equiv 0 \}$, by the complete convergence theorem \cite[Theorem I.2.27]{LiggettSIS1999}, we get
  \begin{equation} \label{eq: something 1}
    \lim_{L \rightarrow \infty} \bP_{\lambda}( D_{L,R} \cap \Delta \neq \emptyset ) = \bP_{\lambda}(\tau^{\Delta}=\infty) \cdot \mu_{\lambda}(X(o)=1).
  \end{equation}
  Moreover, by the self-duality of the contact process and~\cite[Corollary 4.1]{LiggettSteifSD2006}, it holds that
  \begin{equation}\label{eq: something 2}
    \bP_{\lambda}(\tau^{\Delta}=\infty) = 1-\mu_{\lambda}(X\equiv 0 \text{ on } \Delta) \geq 1-(1-\rho)^{|\Delta|}.
  \end{equation} 
  Combining the above inequalities, we obtain
  \begin{equation}\begin{split}
    &\mu_{\lambda}\pigl( X(o) = 1 \mid X\bigl(\bZ^{d-1} \times (-\infty, -n) \bigr) \equiv 0 \pigr) 
    \\  &\qquad\overset{\eqref{eq: long inequality}}{\geq} \sum_{\Delta \subseteq \Lambda_R} \bP_{\lambda}\pigl( D_{L,R} \cap \Delta \neq \emptyset \mid F_R = \Delta, \eta_L \bigl(\bZ^{d-1} \times (-\infty, -n) \bigr) \equiv 0 \pigr)
     \\  &\qquad \qquad\cdot\bP_{\lambda}\pigl( F_R = \Delta \mid \eta_L \bigl(\bZ^{d-1} \times (-\infty, -n) \bigr) \equiv 0 \pigr)
     \\&\qquad\overset{\eqref{eq:claimEnd}}{\geq} \sum_{\Delta \subseteq \Lambda_R} 
     \pigl( \bP_{\lambda}( D_{L,R} \cap \Delta \neq \emptyset ) -o_n(1) \pigr)
     \\  &\qquad \qquad\cdot\bP_{\lambda}\pigl( F_R = \Delta \mid \eta_L \bigl(\bZ^{d-1} \times (-\infty, -n) \bigr) \equiv 0 \pigr)
     \\&\qquad\overset{\eqref{eq: something 1}}{=} \sum_{\Delta \subseteq \Lambda_R} 
     \pigl( \bP_{\lambda}(\tau^{\Delta}=\infty) \cdot \mu_{\lambda}(X(o)=1)-o_L(1) -o_n(1) \pigr)
     \\  &\qquad \qquad\cdot\bP_{\lambda}\pigl( F_R = \Delta \mid \eta_L \bigl(\bZ^{d-1} \times (-\infty, -n) \bigr) \equiv 0 \pigr)
     \\&\qquad\overset{\eqref{eq: something 2}}{=} \sum_{\Delta \subseteq \Lambda_R} 
     \pigl( \mu_{\lambda}(X(o)=1)\bigl( 1-(1-\rho)^{|\Delta|}\bigr)-o_L(1) -o_n(1) \pigr)
     \\  &\qquad \qquad\cdot\bP_{\lambda}\pigl( F_R = \Delta \mid \eta_L \bigl(\bZ^{d-1} \times (-\infty, -n) \bigr) \equiv 0 \pigr)
     \\&\qquad\geq
     \pigl( \mu_{\lambda}(X(o)=1)\bigl( 1-(1-\rho)^{\rho R/2}\bigr)-o_L(1) -o_n(1) \pigr)
     \\  &\qquad \qquad\cdot\bP_{\lambda}\pigl( |F_R| \geq \rho R/2 \mid \eta_L \bigl(\bZ^{d-1} \times (-\infty, -n) \bigr) \equiv 0 \pigr)
     \\&\qquad\overset{\eqref{eq:used q}}{\geq}
     \pigl( \mu_{\lambda}(X(o)=1)\bigl( 1-(1-\rho)^{\rho R/2}\bigr)-o_L(1) -o_n(1) \pigr)
     (1-Ce^{-cR}).
  \end{split}\end{equation} 
  Since the above inequalities holds for all \( L \in \mathbb{N}\), letting \( n \to \infty, \) we conclude that  
  \begin{align}
    \lim_{n \rightarrow \infty} &\mu_{\lambda}\pigl( X(o) = 1 \mid X\bigl(\bZ^{d-1} \times (-\infty, -n) \bigr) \equiv 0 \pigr) 
    \\  &\qquad\geq \mu_{\lambda}\bigl(X(o)=1 \bigr) \bigl(1-(1-\rho)^{\rho R/2} \bigr)(1-Ce^{-cR})
  \end{align}
  and, by letting $R\rightarrow \infty$, that \eqref{eq: goal in last proof} holds.

  What remains is to show that \eqref{eq:claimEnd} holds. To this end, recall that $\Delta \subseteq \Lambda_R$. For \( n \in \mathbb{N} \) and \( L \in [0,\infty)\), consider the (random) set $E_{n,L}$ of vertices $y\in \bZ^{d-1} \times (-\infty,-n) \subset \bZ^d$ to which there exists an active path within the time-window $[0,L]$ that ends at $(y,L)$ and that originates from some space-time location within $\Lambda_{n/2} \times [0,L]$.
  Since $\{E_{n,L} \neq \emptyset\}$ is an increasing event, using first the d-FKG property and then the inequality~\eqref{eq:LDB_GM14} %
  together with a union bound, we have that 
  \begin{align} 
    &\bP_{\lambda} \pigl( E_{n,L} \neq \emptyset \mid F_R = \emptyset, \eta_L\bigl(\bZ^{d-1} \times (-\infty,-n) \bigr) \equiv 0 \pigr)
    \leq \bP_{\lambda} ( E_{n,L} \neq \emptyset )
    \leq Ce^{-cn}
  \end{align}
  for some constants $C,c\in (0,\infty)$ depending on $\lambda$, $d$, and $L$. Hence 
  \begin{equation}\label{eq:boundONe_nl}
    \lim_{n \rightarrow \infty} 
    \bP_{\lambda} \pigl( E_{n,L} \neq \emptyset \mid F_R = \emptyset, \eta_L\bigl(\bZ^{d-1} \times (-\infty,-n) \bigr) \equiv 0 \pigr) =0.
  \end{equation}
  Further, since the event $\{D_{L,R} \cap \Delta \neq \emptyset \}$ is increasing, we have  
  \begin{equation}\label{eq: eq in end}\begin{split}
    & \bP_{\lambda} \pigl( D_{L,R} \cap \Delta \neq \emptyset \mid F_R = \Delta, \eta_L \bigl(\bZ^{d-1} \times (-\infty, -n) \bigr) \equiv 0 \pigr)
    \\&\qquad\geq \bP_{\lambda} \pigl( D_{L,R} \cap \Delta \neq \emptyset \mid F_R = \emptyset, \eta_L \bigl(\bZ^{d-1} \times (-\infty, -n) \bigr) \equiv 0 \pigr)
    \\&\qquad\geq \bP_{\lambda} \pigl( D_{L,R} \cap \Delta \neq \emptyset \mid E_{n,L} = \emptyset, F_R = \emptyset, \eta_L \bigl(\bZ^{d-1} \times (-\infty, -n) \bigr) \equiv 0 \pigr)
    \\&\qquad\qquad\cdot \bP_{\lambda}\pigl( E_{n,L} = \emptyset \mid F_R = \emptyset, \eta_L \bigl(\bZ^{d-1} \times (-\infty, -n) \bigr) \equiv 0 \pigr),
  \end{split}\end{equation}
  where we used the d-FKG property in the first inequality.  
  Now note that on the event $\{ E_{n,L} = \emptyset\}$, the events \( \{F_R = \emptyset \} \) and \( \{ \eta_L \bigl(\bZ^{d-1} \times (-\infty, -n) \bigr) \equiv 0 \} \) do not influence the graphical construction within the space-time region $\Lambda_{n/2} \times [0,L].$ Since $D_{L,R}$ depends only on the graphical construction within this region, it follows that
  \begin{equation}\label{eq: last eq}\begin{split}
    &\lim_{n \rightarrow \infty} \bP_{\lambda}( D_{L,R} \cap \Delta \neq \emptyset \mid E_{n,L} = \emptyset, F_R = \emptyset, \eta_L \bigl(\bZ^{d-1} \times (-\infty, -n) \bigr) \equiv 0 \pigr) 
    \\&\qquad= \bP_{\lambda}( D_{L,R} \cap \Delta \neq \emptyset ).
  \end{split}\end{equation} 
  Combining~\eqref{eq:boundONe_nl},~\eqref{eq: eq in end}, and~\eqref{eq: last eq}, we obtain~\eqref{eq:claimEnd} as desired.
 \end{proof}

\bibliographystyle{plainnat}

\end{document}